 \DeclarePairedDelimiter{\parens}{\lparen}{\rparen}
\newcommand{\A}[0]{\mathcal{A}}
\newcommand{\Bf}[0]{ \A}
\newcommand{\Eps}[0]{\boldsymbol{\varepsilon}}
\newcommand{\Kappa}[0]{\boldsymbol{K}}
\newcommand{\Mom}[0]{\boldsymbol{M}}
\newcommand{ \leikkaus}[0]{\boldsymbol{Q}}
\newcommand{\Div}[0]{\mathrm{div}\,}
\newcommand{\Vn}[0]{V_{n}}
\newcommand{\mnn}[0]{M_{nn}}
\newcommand{\dn}[1]{\frac{\partial #1}{\partial \boldsymbol{n}}}
\newcommand{\dnt}[1]{\tfrac{\partial #1}{\partial \boldsymbol{n}}}
\newcommand{\qn}[0]{Q_n}
\newcommand{\effs}[0]{V_n}
\newcommand{\mns}[0]{M_{ns}}
\newcommand{\msn}[0]{M_{sn}}
\newcommand{\Bh}[0]{\mathcal{A}_h}
\newcommand{\Lh}[0]{\mathcal{L}_h}
\newcommand{\osc}[0]{\mathrm{osc}}
\newcommand{\Ch}[0]{\mathcal{C}_h}
\newcommand{\Eh}[0]{\mathcal{E}_h}
\newcommand{\Gh}[0]{\mathcal{G}_h}
\newcommand{\vertiii}[1]{{\left\vert\kern-0.25ex\left\vert\kern-0.25ex\left\vert #1 
    \right\vert\kern-0.25ex\right\vert\kern-0.25ex\right\vert}}
\newcommand{\enorm}[1]{\vertiii{#1}}
\newcommand\gvh{g_{E,h} ^v}
\newcommand\grh{g_{E,h} ^r}
\newcommand\fh{f_h}
\newcommand{\tj}[0]{\varepsilon^v_i}  
\newcommand{\tjE}[0]{\varepsilon^v_E}
\newcommand{\kj}[0]{\varepsilon^r_i}  
\newcommand{\kjE}[0]{\varepsilon^r_E}
\newcommand{\gvi}[0]{g^v_i}
\newcommand{\gvE}[0]{g_E^v}
\newcommand{\gmi}[0]{g^r_i}
\newcommand{\gmE}[0]{g_E^r}
\newcommand{\gci}[0]{g^c_i}
\newcommand{\eci}[0]{\varepsilon ^c_i}
\newcommand\Rc{R_i^c}
\newcommand\Rv{R_E^v}
\newcommand\Rr{R_E^r}
\newcommand\R{\mathcal{R}}
 \newcommand\B{ \A}
\newtheorem{thm}{Theorem}
\newtheorem{rem}{Remark}
\theoremstyle{definition}
\newtheorem{ass}{Assumption}
\numberwithin{equation}{section}
\title{Nitsche's method for Kirchhoff plates\thanks{Submitted to the editors on \today.\funding{This work was supported by the Academy of Finland (Decision 324611) and by the Portuguese government through FCT (Funda\c{c}\~ao para a Ci\^encia e a Tecnologia), I.P., under the projects PTDC/MAT-PUR/28686/2017 and UTAP- EXPL/MAT/0017/2017.}}}
\author{Tom Gustafsson\thanks{Department of Mathematics and Systems Analysis,
Aalto University, P.O. Box 11100,  \hfill \break00076 Aalto, Finland
e-mail: 
(\email{tom.gustafsson@aalto.fi}, \email{rolf.stenberg@aalto.fi}). }  \and Rolf Stenberg\footnotemark[2] 
 \and {Juha Videman}\thanks{CAMGSD/Departamento de Matem\'atica, Instituto Superior T\'ecnico, Universidade   de Lisboa, 1049-001 Lisbon, Portugal (\email{jvideman@math.tecnico.ulisboa.pt}). }  }
\begin{document}

\maketitle

\begin{abstract}
  We introduce a Nitsche's method for the numerical approximation of the Kirchhoff--Love plate equation under general Robin-type boundary conditions.  We
  analyze the method by presenting a priori and a posteriori error estimates
  in mesh-dependent norms.  Several numerical examples are given to
  validate the approach and demonstrate its properties.
\end{abstract}

\begin{keywords} 
    Kirchhoff plate, Nitsche's method
\end{keywords}

\begin{AMS}65N30\end{AMS}

    \section{Introduction}

  Implementation of $H^2$-conforming finite element methods can be a challenge
  due to the $C^1$-continuity requirement of the finite element
  basis~\cite{ciarlet-2002-finit-elemen}.  In fact, it is a common motivation
  for developing discontinuous Galerkin techniques~\cite{brenner-2012-isopar-c}
  where it is sufficient to guarantee the conformity in a weak sense only,
  other nonconforming methods using special finite elements~\cite{BdVNS-I,BdVNS-II}, or
  mixed methods~\cite{arnold19_hellan_herrm_johns} where the fourth-order
  problem is split into a system of lower order problems. At the same
  time, however, finite element codes including classical $H^2$-conforming elements---such
  as, e.g., the Argyris triangle and the rectangular Bogner--Fox--Schmit element~\cite{ZT}---abound and many are free and readily
  available~\cite{dominguez-2008-algor, renard-2020-getfem,
    gustafsson-2020-kinnal-fem, valdmanc1} to be used in the discretization of fourth-order
  differential operators. Thus, the main challenge
  remaining for the end user is the proper implementation of external loads and
  boundary conditions.

  In \cite{nitsche1971variationsprinzip}, Nitsche introduced a consistent penalty-type method for imposing Dirichlet boundary conditions in the second-order Poisson problem. Nitsche's method was extended to
 other boundary conditions (in particular, inhomogeneous Robin) in Juntunen--Stenberg~\cite{juntunen2009nitsche} by unifying the implementation and analysis
  via
  a parameter-dependent boundary value problem;
  an improved a priori analysis was presented in Lüthen--Juntunen--Stenberg~\cite{LJS}.
  Different boundary conditions (Dirichlet, Neumann, Robin) were obtained
  by changing the value of a single nonnegative parameter.
  The resulting method performed
  similarly well in all cases, i.e.~altering the
  parameter value did not deteoriate the conditioning of the
  resulting linear system or lead to an overrefinement
  as in traditional methods.

   In this study we explore the above ideas \cite{nitsche1971variationsprinzip, juntunen2009nitsche, LJS} in the context of fourth-order $H^2$-conforming problems.  In particular, we seek to unify the implementation and the analysis of
  different boundary conditions for the Kirchhoff--Love plate equation \cite{Kirchhoff1850, love1888xvi} by
  presenting a Nitsche's method which incorporates the boundary conditions in
  the discrete formulation as consistent penalty terms.  We consider elastic
  Robin-type boundary conditions for the deflection and the rotation
  including applied external forces and moments. The classical boundary conditions for the Kirchhoff plates  (clamped,
  simply supported and free) are recovered as special cases.
  Moreover, we allow general matching conditions at the corners of the domain so
  that ball supports, point forces and springs \cite{Friedrichs, blaauwendraad10_plates_fem, zhang19_accur} are all covered by the same
  formalism.

  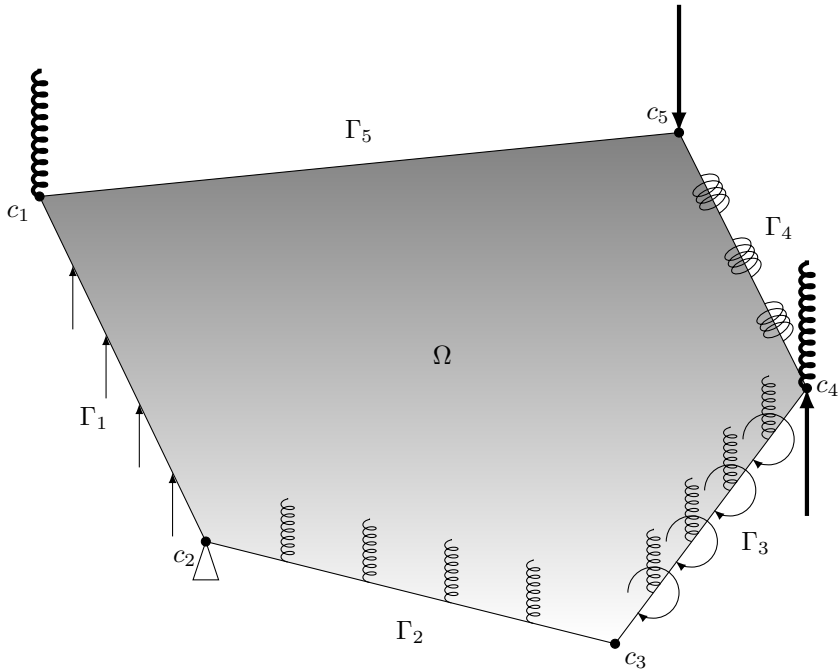
\begin{figure}[h]
    \centering
    \begin{tikzpicture}[scale=1.7]
        \coordinate (A) at (-0.5,0);
        \coordinate (B) at (0.8,-2.7);
        \coordinate (C) at (4,-3.5);
        \coordinate (D) at (5.5,-1.5);
        \coordinate (E) at (4.5,0.5);
        \filldraw[black!10!white,draw=black,shade] (A) -- (B) -- (C) -- (D) -- (E) -- (A);
        \draw [fill=black] (A)node[below left] {$c_1$} circle (1pt);
        \begin{scope}[shift={(A)}]
            \draw[decoration={segment length=1.5mm,coil},decorate,ultra thick] (0,0) -- (0,1);
        \end{scope}
        \draw [fill=black] (B)node[below left] {$c_2$} circle (1pt);
        \begin{scope}[shift={(B)}]
            \draw (0,0) -- (-0.1,-0.3) -- (0.1,-0.3) -- (0,0);
        \end{scope}
        \draw [fill=black] (C)node[below right] {$c_3$} circle (1pt);
        \draw [fill=black] (D)node[right] {$c_4$} circle (1pt);
        \begin{scope}[shift={(D)}]
          \draw[ultra thick,-latex] (0, -1) -- (0, 0);
          \draw[decoration={segment length=1.5mm,coil},decorate,ultra thick] (0,0) -- (0,1);
        \end{scope}
        \draw [fill=black] (E)node[above left] {$c_5$} circle (1pt);
        \begin{scope}[shift={(E)}]
          \draw[ultra thick,-latex] (0, 1) -- (0, 0);
        \end{scope}
        \begin{scope}[shift={($(B)!0.5!(E)$)}]
          \draw [fill=black] (0.0, 0.0)node[below] {$\Omega$};
        \end{scope}
        \begin{scope}[shift={($(A)!0.7!(B)$)}]
          \draw [fill=black] (-0.3, 0)node[above left] {$\Gamma_1$};
        \end{scope}
        \begin{scope}[shift={($(B)!0.5!(C)$)}]
          \draw [fill=black] (0, -0.15)node[below] {$\Gamma_2$};
        \end{scope}
        \begin{scope}[shift={($(C)!0.5!(D)$)}]
          \draw [fill=black] (0.35, -0.05)node[below] {$\Gamma_3$};
        \end{scope}
        \begin{scope}[shift={($(D)!0.6!(E)$)}]
          \draw [fill=black] (0.2, 0.05)node[right] {$\Gamma_4$};
        \end{scope}
        \begin{scope}[shift={($(A)!0.5!(E)$)}]
          \draw [fill=black] (0.0, 0.1)node[above] {$\Gamma_5$};
        \end{scope}
        \begin{scope}[shift={($(A)!0.2!(B)$)}]
          \draw[-latex] (0, -0.5) -- (0, 0);
        \end{scope}
        \begin{scope}[shift={($(A)!0.4!(B)$)}]
          \draw[-latex] (0, -0.5) -- (0, 0);
        \end{scope}
        \begin{scope}[shift={($(A)!0.6!(B)$)}]
          \draw[-latex] (0, -0.5) -- (0, 0);
        \end{scope}
        \begin{scope}[shift={($(A)!0.8!(B)$)}]
          \draw[-latex] (0, -0.5) -- (0, 0);
        \end{scope}
        \begin{scope}[shift={($(B)!0.2!(C)$)}]
          \draw[decoration={segment length=1mm,coil},decorate] (0,0) -- (0,0.5);
        \end{scope}
        \begin{scope}[shift={($(B)!0.4!(C)$)}]
          \draw[decoration={segment length=1mm,coil},decorate] (0,0) -- (0,0.5);
        \end{scope}
        \begin{scope}[shift={($(B)!0.6!(C)$)}]
          \draw[decoration={segment length=1mm,coil},decorate] (0,0) -- (0,0.5);
        \end{scope}
        \begin{scope}[shift={($(B)!0.8!(C)$)}]
          \draw[decoration={segment length=1mm,coil},decorate] (0,0) -- (0,0.5);
        \end{scope}
        \begin{scope}[shift={($(C)!0.2!(D)$)}]
          \draw[-latex] (180:0.2) arc (180:-130:0.2);
          \draw[decoration={segment length=1mm,coil},decorate] (0,0) -- (0,0.5);
        \end{scope}
        \begin{scope}[shift={($(C)!0.4!(D)$)}]
          \draw[-latex] (180:0.2) arc (180:-130:0.2);
          \draw[decoration={segment length=1mm,coil},decorate] (0,0) -- (0,0.5);
        \end{scope}
        \begin{scope}[shift={($(C)!0.6!(D)$)}]
          \draw[-latex] (180:0.2) arc (180:-130:0.2);
          \draw[decoration={segment length=1mm,coil},decorate] (0,0) -- (0,0.5);
        \end{scope}
        \begin{scope}[shift={($(C)!0.8!(D)$)}]
          \draw[-latex] (180:0.2) arc (180:-130:0.2);
          \draw[decoration={segment length=1mm,coil},decorate] (0,0) -- (0,0.5);
        \end{scope}
        \begin{scope}[shift={($(D)!0.33!(E)$)}]
          \draw[decoration={segment length=1mm,amplitude=2mm,coil},decorate] (0,0)--(0.15,-0.3);
        \end{scope}
        \begin{scope}[shift={($(D)!0.58!(E)$)}]
          \draw[decoration={segment length=1mm,amplitude=2mm,coil},decorate] (0,0)--(0.15,-0.3);
        \end{scope}
        \begin{scope}[shift={($(D)!0.83!(E)$)}]
          \draw[decoration={segment length=1mm,amplitude=2mm,coil},decorate] (0,0)--(0.15,-0.3);
        \end{scope}
    \end{tikzpicture}
    \caption{Definition sketch of the plate with different boundary conditions and elastic supports:
    springs at $c_1$ and $c_4$, a ball support at $c_2$, applied point forces
    at $c_4$ and $c_5$,
    an applied shear force on $\Gamma_1$,
    a spring support on $\Gamma_2$ and $\Gamma_3$,
    an applied torque on $\Gamma_3$,
    a torsion spring support on $\Gamma_4$.}
    \label{fig:plate}
  \end{figure}
  
  The Nitsche method is not only practical to implement but has also
  other advantages. For a very stiff support, i.e.~with almost
  clamped conditions, the traditional method leads to two potential problems: 1) the corresponding stiffness matrix becomes ill-conditioned and 2) the standard a posteriori estimators lead to overrefinement. As for the Poisson problem, these  phenomena can be avoided using the Nitsche method presented in this work.
  Moreover, if one is using plate elements in which second derivatives are included as degrees-of-freedom, e.g., the Argyris
  triangle and the Bogner--Fox--Schmit element, and
  if the boundary conditions are enforced by eliminating degrees-of-freedom, one must verify separately that the
  second-order derivatives are zero along the boundary of the domain.
  In practice, e.g.,~in case of non-right angles, this introduces additional linear
  constraints for the solution to satisfy. 
  Nitsche's method circumvents this issue by enforcing the
  boundary conditions weakly.

The rest of the paper is organized as follows.
In Section~\ref{sec:model}, we introduce
the Kirchhoff plate bending model and its
boundary conditions.
In Section~\ref{sec:nitsche}, we
derive the Nitsche method by augmenting
the model's weak formulation with consistent
penalty-type terms.
In Section~\ref{sec:apriori}, we prove the
stability of the resulting discrete formulation
and present the ensuing a priori
error estimate.
In Section~\ref{sec:aposteriori}, we
present the residual a posteriori error
estimators and
prove an error estimate
via a saturation
assumption.
Finally in Section~\ref{sec:results}, we
demonstrate the approach by performing
computational experiments.

\section{The Kirchhoff plate model}

\label{sec:model}

We start by recalling the Kirchhoff plate model with general boundary conditions, cf. \cite{Friedrichs,NH,FS}. Let $\Omega\subset  \mathbb{R}^2$ be a polygonal domain, with corners $c_i$,  and the boundary $\partial \Omega=\cup_{i=1}^m \Gamma_i $, $i=1,\dots , m$, where each $\Gamma_i$ is a line segment, see Figure~\ref{fig:plate}.   Given the deflection $u:\Omega\to \mathbb{R} $ of the midsurface of the plate, the
curvature $\Kappa$ is defined through 
\begin{equation}\label{curvdef}
  \Kappa(u) = -\Eps(\nabla u), \end{equation}
where the infinitesimal strain $\Eps$ is given by
\begin{equation}
    \Eps(\boldsymbol{v})
    = \frac12 \parens*{\nabla \boldsymbol{v}+\nabla \boldsymbol{v}^T}, \quad (\nabla \boldsymbol{v})_{ij} = \frac{\partial v_i}{\partial x_j}, \quad i,j = 1,2.
\end{equation}
The moment tensor $\Mom$ is given by
the  constitutive relation
\begin{equation}
  \qquad \Mom(u) = \frac{Ed^3}{12(1 + \nu)}
      \parens*{\Kappa(u)
               + \frac{\nu}{1 - \nu}(\text{tr}\,\Kappa(u)) \boldsymbol{I}},
\end{equation}
where $d$ denotes the plate thickness and $\boldsymbol{I}$ is the identity tensor.
Above, $E$ and $\nu$ are the Young's modulus and the Poisson ratio, respectively. 

 The shear force 
$ \leikkaus$ is related to the moment tensor through the moment equilibrium equation
\begin{equation}
     \boldsymbol{\Div} \Mom(u) =  \leikkaus(u),
\end{equation}
where $\boldsymbol{\Div}$ is the vector-valued divergence operator.
The transverse shear equilibrium reads as follows
\begin{equation}
 -\Div  \leikkaus(u) = f
\end{equation}
where $f$ is an external transverse loading.
Combining the above expressions yields the Kirchhoff--Love plate
equation
\begin{equation}
  \label{strong}
 D \Delta^2 u = f,
\end{equation}
where $D$, the plate rigidity, is defined as
\begin{equation}
  D=\frac{E d^3}{12(1-\nu^2)} .
\end{equation} 

We consider quite general boundary conditions. A vertical force $\gvi$ and a normal moment $\gmi$ act on each segment $\Gamma_i$ of the boundary and the support is elastic with respect to both the deflection and the rotation, with the spring constants $1/\tj$ and $1/\kj$, respectively.
At the corner $c_i $, also connected to a spring with constant $1/\eci$, acts a point force $\gci$.

The energy of the system can be written as
\begin{equation}
\begin{aligned}
I(v)&=\frac{1}{2}\int_\Omega \Mom(v) : \Kappa(v) \,\mathrm{d}x \\
&\phantom{=}~+\sum_{i=1}^m  \bigg\lbrace \frac{1}{2\tj }\int_{  \Gamma_i}   v^2\, \mathrm{d} s + \frac{1}{2\kj}\int_{ \Gamma_i } \Big( \dn{v} \Big)^2 \mathrm{d} s +\frac{1}{2 \eci} v(c_i)^2 \\
&\qquad \qquad ~~- \int_{ \Gamma_i}    \gvi v \, \mathrm{d} s 
+  \int_{\Gamma_i}  \gmi    \dn{v}  \, \mathrm{d} s
- \sum_{i=1}^m \gci v(c_i)\bigg\rbrace
-
 \int_\Omega fv \,\mathrm{d}x ,
\end{aligned}
\end{equation}
from where follows the 
  variational formulation: find $u\in
H^2(\Omega)$ such that
\begin{equation}\label{heikko}
\begin{aligned}
  &\int_\Omega \Mom(u)  : \Kappa(v) \,\mathrm{d}x 
+\sum_{i=1}^m  \bigg\lbrace \frac{1}{ \tj }\int_{  \Gamma_i}   u v\, \mathrm{d} s   + \frac{1}{ \kj }\int_{ \Gamma_i }  \dn{u}   \dn{v}   \mathrm{d} s +\frac{1}{ \eci } u(c_i) v(c_i)\bigg\rbrace \\
&=  \int_\Omega fv \,\mathrm{d}x + \sum_{i=1}^m  \bigg\lbrace \int_{ \Gamma_i}    \gvi v \, \mathrm{d} s 
-  \int_{\Gamma_i}  \gmi    \dn{v}  \, \mathrm{d} s
+\sum_{i=1}^m \gci v(c_i)\bigg\rbrace
\quad  \forall v \in H^2(\Omega).
\end{aligned}
\end{equation}
The corresponding boundary value problem is posed using the normal shear force, and the normal and twisting moments
\begin{equation}
\begin{aligned}
  \qn(w)&= \leikkaus(w)\cdot \boldsymbol{n},\\
  \mnn(w)&=\boldsymbol{n} \cdot  \Mom(w) \boldsymbol{n}, \\
  \mns(w)&=\msn(w)=\boldsymbol{s} \cdot  \Mom(w) \boldsymbol{n}.
\end{aligned}
\end{equation}
Above~$\boldsymbol{n}$ denotes the outward unit normal on $\partial \Omega$
and $\boldsymbol{s} = (n_1, -n_2)$ is the respective unit tangent vector. Moreover, we define
the Kirchhoff shear force as
\begin{equation}\label{ksf}
\effs(w)= \qn(w) +\frac{ \partial \mns(w)}{\partial s},
\end{equation}
and the jump in the twisting moment
 \begin{equation}
 \llbracket \mns(u) \rrbracket|_{c_i} = \lim_{\epsilon\rightarrow 0+}\Big(\mns(u)|_{c_i + \epsilon(c_{i+1} - c_i)} - \mns(u)|_{c_i + \epsilon(c_{i-1} - c_i)}\Big),  
 \end{equation}
$i=1,\dots,m,$ with $c_{m+1}=c_1$. 
 After repeated integrations by parts, one then obtains
\begin{equation}\label{intbyparts}
\begin{aligned}
 \int_\Omega D\Delta^2 uv\,\mathrm{d}x   = &\int_\Omega \Mom(u) : \Kappa(v) \,\mathrm{d}x  \\
 &+ \sum_{i=1}^m \bigg\lbrace \int_{\Gamma_i} \mnn(u) \dn{v}\,\mathrm{d}s  \\
 &\qquad ~~~~ -  \int_{\Gamma_i} \Vn(u)v\,\mathrm{d}s   -
 \llbracket \mns(u) \rrbracket|_{c_i} v(c_i)\bigg\rbrace.
 \end{aligned}
\end{equation}
 Substituting \eqref{intbyparts} into the weak form \eqref{heikko} leads to the differential equation 
  \eqref{strong}  and the boundary conditions 
on $\Gamma_i$  
\begin{equation}
  \label{eq:boundaryconds}
  \Vn(u) + \frac{1}{ \tj} u = \gvi, \quad \mnn(u) - \frac{1}{ \kj} \dn{u} = \gmi,  \ i=1,\dots,m,
\end{equation}
and the corner conditions  
\begin{equation}
  \label{eq:matchingcond}
  \llbracket \mns(u) \rrbracket |_{c_i}  + \frac{1}{\eci} u(c_i)
  = \gci, \quad   i=1,\ldots, m,
\end{equation}
with the letters $v,r,c$ indicating vertical, rotational and corner, respectively. The boundary value problem is now formed by the equations
\eqref{strong}, \eqref{eq:boundaryconds} and \eqref{eq:matchingcond}.

\begin{rem}
  \label{rem:stdbcs}
The boundary and corner conditions \eqref{eq:boundaryconds} and
\eqref{eq:matchingcond} include
\begin{itemize}
  \item \emph{clamped} at edge $\Gamma_i$ when $ \tj,  \kj, \eci, \varepsilon ^c_{i+1}  \rightarrow 0$,
\item \emph{simply supported} at edge $\Gamma_i$ when $ \tj, \eci ,\varepsilon ^c_{i+1}  \rightarrow 0$,
  $ \kj \rightarrow \infty$, $\gmi = 0$,
\item \emph{free} at edge $\Gamma_i$ when $ \tj,  \kj, \eci,\varepsilon ^c_{i+1}  \rightarrow \infty$ and $\gvi = \gmi = \gci = g_{i+1}^c= 0$,
\end{itemize}
and various other combinations of prescribed forces and moments on the boundary
and at the corners of the domain.
\end{rem}

\section{The finite element method}

\label{sec:nitsche}

The domain $\Omega$ is split into non-overlapping regular elements $K \in \Ch$. As usual, the mesh parameter is
  $h=\max_{K\in \Ch} h_K $. 
The set of the interior edges of the mesh is denoted by $\Eh$ and
the set of the boundary edges by $\Gh$. By $h_E$ we denote the length of the edge $E\in \Eh\cup\Gh$  
and by $h_i = \max_{K \in \Ch, c_i \in K} h_K$ the local mesh length around the corner $c_i$.

At times, we write in the estimates $a \lesssim b$ (or $a \gtrsim b$) when $a \leq Cb$ (or $a\geq Cb$), for some positive constant $C$, independent of the mesh parameter $h$ and the parameters $\tj, \kj, \eci$.  Moreover, we use the standard notation $(\cdot,\cdot)_R$ for the $L^2(R)$-inner product and write $(\cdot,\cdot)$ for the $L^2(\Omega)$-inner product. 

The (conforming) finite element space is defined as
\begin{equation}
V_h=\{ v\in H^2(\Omega) : v\vert_K \in V_K \ \forall K \in \Ch \}
\end{equation}
with the polynomial $V_K$  space satisfying
\begin{equation}P_p(K)\subset V_K \subset P_l(K),
\end{equation}
for some $p$ and $l$; $P_l(K)$ is the complete space of polynomials of degree $l$ in $K$. Examples of such spaces include (cf. \cite{ciarlet}), the Argyris triangle with $p=l=5$, the Bell triangle with $p=3$ and $l=5$ and the Bogner--Fox--Schmit rectangular element with $p=3$ and $l=6$.  The Hsieh--Clough--Tocher element is another option, but will lead to an additional term in the a posteriori estimator and hence is not included in the analysis.

The starting point for the design of the Nitsche method is the integration by parts formula \eqref{intbyparts}. From this we conclude that the exact solution $u$ satisfies the equation
\begin{equation}\label{exactsat}
\begin{aligned}
  &\int_{\Omega} \Mom(u) : \Kappa(v) \,\mathrm{d}x  \\
  &+ \sum_{i=1}^m \bigg\lbrace \int_{\Gamma_i} \mnn(u) \dn{v}\,\mathrm{d}s  -  \int_{\Gamma_i} \Vn(u)v\,\mathrm{d}s   -
 \llbracket \mns(u) \rrbracket|_{c_i} v(c_i)\bigg\rbrace
 \\& = \int_\Omega fv \,\mathrm{d}x \quad \forall v \in V_h .
 \end{aligned}
\end{equation}
Defining the bilinear form $\B(u,v)$ as the left-hand side in \eqref{exactsat}, it follows that
\begin{equation}
\B(u,v)=(f,v) \quad \forall  v\in V_h.
\end{equation}

 Next, we introduce the stabilizing and symmetrizing terms that will be added to the bilinear form. The spring constants and the loads corresponding to an edge
 $E \subset \Gamma_i$ are denoted by
  \begin{equation}
  \tj\vert_E=\tjE,\quad \kj\vert_E =\kjE, \quad \gvi\vert_E =\gvE, \quad \gmi\vert_E = \gmE.
 \end{equation}
 The first boundary condition in \eqref{eq:boundaryconds}, which at edge $E$ can be written  as
 \begin{equation}
 \label{altbcshear}
  \tjE \Vn(u) +  u = \tjE \gvE,
 \end{equation}
 thus prompts the definition of the residual
  \begin{equation}
  \Rv(v)=   \tjE (\Vn(v)-\gvE) +  v .
  \end{equation}
  Now, let $\gamma>0$ denote the stabilization parameter. The boundary
  condition \eqref{altbcshear} implies that
 \begin{equation}\label{firststab}
  \sum_{E \in \Gh}  \frac{1}{ \tjE + \gamma h_E^3} (\Rv(u), v)_E=0 \qquad \forall  v\in V_h,
  \end{equation}
  and
   \begin{equation}
  \sum_{E \in \Gh}  \frac{\gamma h_E^3}{ \tjE + \gamma h_E^3}\big (\Rv(u), \Vn(v)\big )_E=0 \qquad \forall  v\in V_h .
  \end{equation}
Similarly, we introduce the residuals for the remaining  boundary conditions, namely
\begin{equation}
\Rr(v)=    \kjE(\mnn(v)-\gmE )- \dn{v},  \quad \Rc(v) = \eci (\llbracket \mns(v) \rrbracket |_{c_i}- \gci)  + v(c_i),
\end{equation} 
and write them all together as
\begin{equation}\label{res=0}
\R_h(u,v)=0,
\end{equation}
where
\begin{equation}
\begin{aligned}
\R_h(u,v)
=& \sum_{E\in \Gh} \bigg\{       \frac{1}{ \tjE + \gamma h_E^3} (\Rv(u), v)_E +    \frac{\gamma h_E^3}{ \tjE + \gamma h_E^3}\big (\Rv(u), \Vn(v)\big )_E
\\
&\hspace{1cm}
-    
      \frac{1}{ \kjE + \gamma h_E} \left(\Rr(u),\dnt{v}\right)_E -    \frac{\gamma h_E}{ \kjE + \gamma h_E}\big (\Rr(u), \mnn(v)\big )_E  \bigg\}\hspace{-0.2cm}
\\
&  + \sum_{i=1}^m   \bigg\{ \ \frac{1}{ \eci + \gamma h_i^2}\Rc(u) v(c_i)
  +    \frac{\gamma h_i^2}{ \eci + \gamma h_i^2}\Rc(u)\llbracket \mns(v)\rrbracket |_{c_i} \bigg\}.
\end{aligned}
\end{equation}
Hence, the exact solution $u\in H^2(\Omega)$ satisfies 
 \begin{equation}
     \label{nitscheweakpre}
 \B(u,v)+\R_h(u,v)=(f,v) \quad \forall  v\in V_h.
 \end{equation}

  Finally, rearranging the terms, \eqref{nitscheweakpre} can be written as 
\begin{equation}
    \Bh(u, v) = \Lh(v) \quad \forall v \in V_h,
  \end{equation}
with the symmetric bilinear form $\Bh$ and the linear form $\Lh$ defined as
\begin{equation}
  \begin{aligned}
    \Bh(w, v) &= a(w,v)  + b_h(w, v) + c_h(w, v) + d_h(w, v),
     \\
    \Lh(v) &=l(v)+ f_h(v) + g_h(v) + l_h(v),
  \end{aligned}
\end{equation}
 where
 \begin{equation}
a(w,v) = \int_\Omega \Mom(w) : \Kappa(v) \,\mathrm{d}x,  \quad l(v)= \int_\Omega fv \,\mathrm{d}x, 
\end{equation}

\begin{equation}
\begin{aligned}
b_h(w,v)=   \sum_{E \in \Gh} \frac{1}{ \tjE + \gamma h_E^3} \Big\{&\gamma h_E^3\big(( \Vn(w), v)_E + (w, \Vn(v))_E \big) 
\\
&-\gamma h_E^3  \tjE\big(\Vn(w), \Vn(v)\big)_E +(w, v)_E \Big\},
\end{aligned}
\end{equation}

\begin{equation}
\begin{aligned}
c_h(w,v) = 
\sum_{E \in \Gh} 
 \frac{1}{ \kjE + \gamma h_E}
 \Big\{ &\gamma h_E  \left(\left(\mnn(w), \dnt{v}\right)_E + \left(\dnt{w}, \mnn(v)\right)_E \right)
  \\
&-\gamma  h_E  \kjE (\mnn(w), \mnn(v))_E +\left(\dnt{w}, \dnt{v}\right)_E  \Big\},
\end{aligned}
\end{equation}
\begin{equation}
\begin{aligned}
d_h(w,v)=   \sum_{i=1}^m  \frac{1}{\eci + \gamma h_i^2}
\Big\{&-\gamma h_i^2(\llbracket \mns(w) \rrbracket \vert_{c_i} v(c_i)  +
\llbracket \mns(v) \rrbracket \vert_{c_i} w(c_i))
  \\
  &
  -\gamma h_i ^2\eci  \llbracket \mns(w) \rrbracket \vert_{c_i}  \llbracket \mns(v) \rrbracket \vert_{c_i}  +w(c_i) v(c_i) \Big\},
\end{aligned}
\end{equation}
and 
\begin{equation}
\begin{aligned}
f_h(v)=  \sum_{E \in \Gh}\frac{ \tjE}{ \tjE + \gamma h_E^3} \Big\{ (\gvE, v)_E -\gamma   h_E^3(\gvE, \Vn(v))_E \Big\},
\end{aligned}
\end{equation}
\begin{equation}
\begin{aligned}
g_h(v) =  \sum_{E \in \Gh}\frac{\gamma  \kjE h_E}{ \kjE + \gamma h_E} \Big\{ -    \left(\gmE, \dnt{v}\right)_E - \gamma h_E(\gmE, \mnn(v))_E \Big\},
\end{aligned}
\end{equation}
\begin{equation}
\begin{aligned}
l_h(v) =  \sum_{i=1}^m
\frac{\eci}{\eci + \gamma h_i^2} \Big\{   - g_i v(c_i) -\gamma h_i^2   g_i \llbracket \mns(v) \rrbracket|_{c_i} \Big\}.
\end{aligned}
\end{equation}
  The 
  {\bf Nitsche method} now reads as follows: find $u_h \in V_h$
satisfying
\begin{equation}
  \label{weak}
  \Bh(u_h, v) = \Lh(v) \quad \forall v \in V_h.
\end{equation}

In the literature, it is often stated that the Nitsche's method and stabilized methods are consistent only for a sufficiently smooth solution. In the present case, the assumption would mean that $\Vn(u)\vert_E$ and $\mnn(u)\vert_E $ are in $L^2(E)$. However, recalling that we arrived at the method by adding weighted residuals to the variational formulation,  these residuals are smooth and vanish identically for the exact solution. Hence the following theorem holds.
\begin{thm}[Consistency]
The solution $u$ to \eqref{heikko}  satisfies
  \begin{equation}
    \Bh(u, v) = \Lh(v) \quad \forall v \in V_h.
  \end{equation}
\end{thm}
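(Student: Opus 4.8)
The plan is to retrace the derivation of the Nitsche bilinear form in reverse. Starting from the variational formulation \eqref{heikko}, the key observation recorded in the text just before the theorem is that the method was assembled by adding to \eqref{heikko} a collection of weighted residuals that vanish identically at the exact solution. So I would first invoke the integration-by-parts identity \eqref{intbyparts}, which is valid for $u$ solving \eqref{heikko} since the construction only uses the smoothness of the \emph{residuals} $\Rv(u)$, $\Rr(u)$, $\Rc(u)$ (which are smooth and equal to zero, not of the individual boundary traces $\Vn(u)$, $\mnn(u)$, $\llbracket \mns(u)\rrbracket$). Substituting \eqref{intbyparts} into \eqref{heikko} and using the differential equation \eqref{strong} together with the boundary and corner conditions \eqref{eq:boundaryconds}–\eqref{eq:matchingcond} gives exactly equation \eqref{exactsat}, i.e. $\B(u,v)=(f,v)$ for all $v\in V_h$.

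Next I would establish $\R_h(u,v)=0$. By \eqref{altbcshear} the residual $\Rv(u)$ vanishes on every edge $E\in\Gh$, by construction of $\Rr$ and $\Rc$ from \eqref{eq:boundaryconds} and \eqref{eq:matchingcond} so do $\Rr(u)$ and $\Rc(u)$. Since every term appearing in the definition of $\R_h(u,v)$ is linear in one of these three residuals evaluated at $u$, each summand vanishes, hence $\R_h(u,v)=0$ for all $v\in V_h$. This is precisely \eqref{res=0}. Adding the two identities yields \eqref{nitscheweakpre}, $\B(u,v)+\R_h(u,v)=(f,v)$.

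Finally, I would note that the passage from \eqref{nitscheweakpre} to the stated identity $\Bh(u,v)=\Lh(v)$ is purely algebraic: it is the same rearrangement of terms — collecting the $w$-and-$v$-symmetric pieces into $b_h,c_h,d_h$ and the load-dependent pieces into $f_h,g_h,l_h$ — that was used in the text to define $\Bh$ and $\Lh$ in the first place. Since that regrouping is an identity between bilinear/linear forms (no solution-specific cancellation is used), substituting $w=u$ reproduces exactly $\Bh(u,v)=\Lh(v)$ for all $v\in V_h$.

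The only subtle point, and the one I would state carefully, is the regularity issue flagged in the paragraph preceding the theorem: one must not claim that $\Vn(u)|_E$ or $\mnn(u)|_E$ individually lie in $L^2(E)$ for a merely $H^2$ solution of \eqref{heikko}. The argument avoids this by working with the full residuals $\Rv(u)$, $\Rr(u)$, $\Rc(u)$, which are identically zero and therefore trivially integrable against the smooth finite element test traces; all the weights in $\R_h$ are bounded, so no term requires more than this. Thus the "main obstacle" is not a computation but making precise that the manipulations are legitimate at the limited regularity available — once that is granted, the proof is a short chain of substitutions.
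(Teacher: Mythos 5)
Your proposal is correct and follows essentially the same route as the paper, which gives no separate proof but argues exactly as you do in the paragraph preceding the theorem: $\B(u,v)=(f,v)$ by construction, $\R_h(u,v)=0$ because the weighted residuals $\Rv(u)$, $\Rr(u)$, $\Rc(u)$ vanish identically for the exact solution, and $\Bh(u,v)=\Lh(v)$ is then just the algebraic regrouping already performed in \eqref{nitscheweakpre}. Your treatment of the regularity caveat — working with the full residuals rather than the individual traces $\Vn(u)\vert_E$, $\mnn(u)\vert_E$ — is precisely the paper's stated justification.
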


\section{Stability and a priori error analysis}

\label{sec:apriori}

The error analysis will be presented in the mesh-dependent norms
\begin{equation}
  \begin{aligned}
    \| w \|_h^2 = a(w,w) &+ \sum_{E \in \Gh} \bigg\{ \frac{1}{ \tjE + h_E^3} \| w \|_{0,E}^2 + \frac{1}{ \kjE + h_E} \left\| \dn{w} \right\|^2_{0,E} \bigg\}\\&+\sum_{i=1}^m \frac{1}{\eci+h_i^2} w(c_i)^2, \\
    \enorm{ w }_h^2 = \| w \|^2_h &+ \sum_{E \in \Gh} \Big\{ h_E^3 \| \Vn(w) \|_{0,E}^2 + h_E \| \mnn(w) \|_{0,E}^2 \Big\}
       \\ &+ \sum_{i=1}^m h_i^2 \big( \llbracket \mns(w) \rrbracket|_{c_i}  \big)^2. 
  \end{aligned}
\end{equation}
The following inverse estimate---true for every $v \in V_h$ with a constant $C_I>0$  independent of the   parameters $h, \tj, \kj, \eci$---can be proven by a scaling argument:
\begin{equation}
  \label{inverse}
  \begin{aligned}
  \sum_{E \in \Gh} \Big\{h_E^3 \| \Vn(v) \|_{0,E}^2 
  &+ h_E \| \mnn(v) \|_{0,E}^2\Big\} 
  +\sum_{i=1}^m h_i^2 \big( \llbracket \mns(w) \rrbracket|_{c_i}  \big)^2 \leq C_I   a(v, v).\hspace{-0.3cm}
  \end{aligned}
\end{equation}
Consequently,  the norms $\|\cdot\|_h$ and $\enorm{\,\cdot\,}_h$ are equivalent.

We will start by showing that the discrete bilinear form $\Bh$ is coercive. 
\begin{thm}[Stability]
  \label{thm:stab}
  Suppose that $0 < 2 \gamma < C_I^{-1}$. Then
  \begin{equation}
    \Bh(v, v) \gtrsim \|v\|_h^2 \quad \forall v \in V_h.
  \end{equation}
\end{thm}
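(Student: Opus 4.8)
The plan is to test the symmetric form with $w=v$ and to exploit that $b_h(v,v)$, $c_h(v,v)$ and $d_h(v,v)$ are sums of blocks of one and the same algebraic shape. Writing $\Bh(v,v)=a(v,v)+b_h(v,v)+c_h(v,v)+d_h(v,v)$, each edge contribution to $b_h(v,v)$ reads $\tfrac{1}{s+t}\bigl(2t\,(\Vn(v),v)_E-ts\,\|\Vn(v)\|_{0,E}^2+\|v\|_{0,E}^2\bigr)$ with $s=\tjE$ and $t=\gamma h_E^3$; the edge contributions to $c_h(v,v)$ have the same form with $s=\kjE$, $t=\gamma h_E$, the pair $\bigl(\mnn(v),\dn{v}\bigr)$ replacing $(\Vn(v),v)$ and $\|\dn{v}\|_{0,E}^2$ replacing $\|v\|_{0,E}^2$; and the corner contributions to $d_h(v,v)$ likewise with $s=\eci$, $t=\gamma h_i^2$, the product $\llbracket\mns(v)\rrbracket|_{c_i}\,v(c_i)$ and $v(c_i)^2$ (the extra minus sign on that cross term is harmless).

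For one such block I would estimate the indefinite cross term by Cauchy--Schwarz and the weighted Young inequality $2xy\le 2x^2+\tfrac12 y^2$ with $x=t\,\|\Vn(v)\|_{0,E}$, $y=\|v\|_{0,E}$, and then use the elementary bound $\tfrac{t(2t+s)}{s+t}\le 2t$, valid for all $s,t\ge0$; this yields the block lower bound $-2t\,\|\Vn(v)\|_{0,E}^2+\tfrac{1}{2(s+t)}\|v\|_{0,E}^2$, and analogously for the $c_h$- and $d_h$-blocks. Summing over all $E\in\Gh$ and all corners gives
\begin{align*}
\Bh(v,v)\ \ge{}&\ a(v,v)-2\gamma\Bigl(\sum_{E\in\Gh}\bigl\{h_E^3\|\Vn(v)\|_{0,E}^2+h_E\|\mnn(v)\|_{0,E}^2\bigr\}+\sum_{i=1}^m h_i^2\bigl(\llbracket\mns(v)\rrbracket|_{c_i}\bigr)^2\Bigr)\\
&\ +\tfrac12\Bigl(\sum_{E\in\Gh}\Bigl\{\tfrac{1}{\tjE+\gamma h_E^3}\|v\|_{0,E}^2+\tfrac{1}{\kjE+\gamma h_E}\|\dn{v}\|_{0,E}^2\Bigr\}+\sum_{i=1}^m\tfrac{1}{\eci+\gamma h_i^2}v(c_i)^2\Bigr).
\end{align*}
By the inverse estimate \eqref{inverse} the parenthesised negative sum is bounded by $C_I\,a(v,v)$, hence $\Bh(v,v)\ge(1-2\gamma C_I)\,a(v,v)+\tfrac12(\cdots)$, and $1-2\gamma C_I>0$ thanks to the hypothesis $2\gamma<C_I^{-1}$.

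To conclude, since $\gamma$ is fixed one has $\tjE+\gamma h_E^3\le\max\{1,\gamma\}(\tjE+h_E^3)$, and likewise for the other two denominators, so the retained positive sums are bounded below by a fixed multiple of the corresponding terms of $\|v\|_h^2$; taking the smaller of $1-2\gamma C_I$ and that multiple gives $\Bh(v,v)\gtrsim\|v\|_h^2$. I expect the only delicate point to be the choice of constants in the Young inequality: it must keep the positive $\|v\|_{0,E}^2$, $\|\dn{v}\|_{0,E}^2$ and $v(c_i)^2$ terms with a strictly positive coefficient while making the negative coefficient on $h_E^3\|\Vn(v)\|_{0,E}^2$ (and its analogues) no larger than $2\gamma$, which is exactly the quantity that \eqref{inverse} turns into $2\gamma C_I\,a(v,v)$; the split $2xy\le 2x^2+\tfrac12 y^2$ achieves both. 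Note that no regularity of $v$ beyond $v\in V_h$ is invoked, since for piecewise polynomials the traces $\Vn(v)$, $\mnn(v)$, $\mns(v)$ are well defined and are precisely the quantities controlled in \eqref{inverse}.
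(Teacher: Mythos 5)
Your proof is correct and follows essentially the same route as the paper: blockwise Cauchy--Schwarz plus the weighted Young inequality $2xy\le 2x^2+\tfrac12 y^2$ (the paper's $\delta=2$), the elementary bound $\tfrac{t(2t+s)}{s+t}\le 2t$, then the inverse estimate \eqref{inverse} to absorb the negative terms into $a(v,v)$ under $2\gamma<C_I^{-1}$. The only difference is cosmetic: you make explicit the harmless sign of the corner cross term and the passage from the $\gamma$-weighted denominators $\tjE+\gamma h_E^3$ etc.\ to those in $\|\cdot\|_h$, which the paper leaves implicit.
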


\begin{proof}
 For $v \in V_h$, the Schwarz and Young's inequalities with some $\delta>0$ give
   \begin{equation}
   \begin{aligned}
    b_h(v,v)
    &=\sum_{E \in \Gh}\frac{1}{ \tjE + \gamma h_E^3} \Big\{-2\gamma h_E^3 ( \Vn(v), v)_E   
 -\gamma  \tjE h_E^3\Vert \Vn(v)\Vert_{0,_E}^2 +\Vert v\Vert_{0,E}^2 \Big\}\hspace{-0.2cm}
 \\  
 &\geq 
 \sum_{E \in \Gh}
 \frac{\gamma h_E^3}{ \tjE + \gamma h_E^3} \Big\{-2\gamma h_E^3 \Vert \Vn(v)\Vert_{0,E} \Vert v\Vert_{0,E}   \\
 &\hspace{3.07cm}-\gamma  \tjE h_E^3\Vert \Vn(v)\Vert_{0,_E}^2 +\Vert v\Vert_{0,E}^2 \Big\}
 \\
 &\geq 
  \sum_{E \in \Gh} 
  \frac{\gamma h_E^3}{ \tjE + \gamma h_E^3}\Big\{   
 -\gamma h_E^3 ( \tjE   +\delta \gamma h_E^3) \Vert \Vn(v)\Vert_{0,_E}^2 +(1-\delta^{-1}) \Vert v\Vert_{0,E}^2 \Big\}.\hspace{-0.2cm}
 \end{aligned}
  \end{equation}
  Choosing $\delta=2$ yields
  \begin{equation}
  \begin{aligned}
    b_h(v,v)&\geq
    \sum_{E \in \Gh} \bigg\{   
 -\gamma h_E^3 \frac{  \tjE   +2\gamma h_E^3}{ \tjE + \gamma h_E^3} \Vert \Vn(v)\Vert_{0,_E}^2 +  \frac{1}{ 2(\tjE + \gamma h_E^3)}\Vert v\Vert_{0,E}^2 \bigg\}
 \\
 & \geq  \sum_{E \in \Gh} \bigg\{   
 -2\gamma  h_E^3\Vert \Vn(v)\Vert_{0,_E}^2 + \frac{1}{ 2(\tjE + \gamma h_E^3)} \Vert v\Vert_{0,E}^2 \bigg\}
 .
 \end{aligned}
  \end{equation}
  By similar arguments,
  we get
  \begin{equation}
  c_h(v,v) \geq
\sum_{E \in \Gh}\! \Bigg\{  
 -2\gamma h_E \Vert \mnn(v)\Vert_{0,E}^2 +\frac{1}{ 2(\kjE + \gamma h_E)}\left\Vert \dn{v}\right\Vert_{0,E}^2\!\Bigg\},
  \end{equation}
  and
  \begin{equation}
  d_h(v,v)\geq    \sum_{i=1}^m  \bigg\{ -2\gamma h_i^2 \big( \llbracket \mns(v) \rrbracket|_{c_i}  \big)^2
   +\frac{1}{2(\eci + \gamma h_i^2)}  v(c_i)^2 \bigg\}. 
  \end{equation}
   This gives
   \begin{equation}
   \begin{aligned}
   \Bh(v,v) \geq a(v,v)
    &-2\gamma\bigg(\sum_{E \in \Gh} \Big\{h_E^3 \| \Vn(v) \|_{0,E}^2 
    + h_E \| \mnn(v) \|_{0,E}^2\Big\}\\
   &\hspace{1cm}+\sum_{i=1}^m h_i^2 \big( \llbracket \mns(w) \rrbracket|_{c_i}  \big)^2\bigg) 
\\& +\frac{1}{2} \bigg(
   \sum_{E \in \Gh} \bigg\{ \frac{1}{ \tjE + h_E^3} \| v \|_{0,E}^2 + \frac{1}{ \kjE + h_E} \left\| \dn{v} \right\|^2_{0,E} \bigg\}\\
 &\hspace{0.9cm}+\sum_{i=1}^m \frac{1}{\eci+h_i^2} v(c_i)^2\bigg).
   \end{aligned}
   \end{equation}
   The assertion is thus proved after choosing $0< \gamma < C_I^{-1}/2.$ \end{proof}

  Stability, consistency and the continuity of the bilinear form $\Bh$ together imply that
  \begin{equation}
    \label{eq:cea}
      \|u - u_h\|_h\lesssim   \enorm{ u-v }_h \quad \forall  v\in V_h.
  \end{equation}
  Using standard interpolation theory, we thus arrive at the following
  error estimate:
\begin{thm}[A priori estimate]
  \label{thm:apriori}
 Let $7/2 < s \leq p + 1$. For any solution $u \in H^s(\Omega)$ of \eqref{heikko}  it holds that
  \begin{equation}
    \|u - u_h\|_h \lesssim h^{s-2} \|u\|_s.
  \end{equation}
\end{thm}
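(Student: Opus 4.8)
The plan is to invoke the Céa-type estimate \eqref{eq:cea}, which has already been established from stability (Theorem~\ref{thm:stab}), consistency, and continuity of $\Bh$ with respect to the norm $\enorm{\,\cdot\,}_h$. Thus it suffices to choose a suitable $v\in V_h$ and bound $\enorm{u-v}_h$ from above by $h^{s-2}\|u\|_s$. The natural choice is $v = \pi_h u$, the Scott--Dupont (or Clément-type) interpolant of $u$ into $V_h$; since $V_K\supset P_p(K)$, polynomials of degree $p$ are reproduced locally, which yields the standard broken estimate
\begin{equation}\label{eq:interp}
\sum_{K\in\Ch} h_K^{2(m-r)}\,\|u-\pi_h u\|_{m,K}^2 \lesssim h^{2(s-r)}\|u\|_{s}^2,\qquad 0\le m\le r\le s,\ r\le p+1,
\end{equation}
valid for $u\in H^s(\Omega)$ with $s\le p+1$.

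Next I would estimate $\enorm{u-\pi_h u}_h$ term by term against the element-wise $H^r$-seminorms of $u-\pi_h u$, converting each edge- and corner-norm into a bulk norm via scaled trace inequalities. Concretely: the volume term $a(u-\pi_h u,u-\pi_h u)$ is controlled by $\sum_K\|u-\pi_h u\|_{2,K}^2$; for an edge $E\subset\partial K$ the scaled trace inequality gives $h_E\,\| \partial_n(u-\pi_h u)\|_{0,E}^2 \lesssim \|u-\pi_h u\|_{1,K}^2 + h_K^2\|u-\pi_h u\|_{2,K}^2$ and $h_E^{-1}\|u-\pi_h u\|_{0,E}^2\lesssim h_K^{-2}\|u-\pi_h u\|_{0,K}^2+\|u-\pi_h u\|_{1,K}^2$, so the $\|\cdot\|_h$-edge contributions are absorbed since $\tjE+h_E^3\ge h_E^3$ and $\kjE+h_E\ge h_E$. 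For the higher-order edge terms in $\enorm{\,\cdot\,}_h$, namely $h_E^3\|\Vn(u-\pi_h u)\|_{0,E}^2$ and $h_E\|\mnn(u-\pi_h u)\|_{0,E}^2$: since $\Vn$ involves third derivatives and a tangential derivative of second derivatives, and $\mnn$ involves second derivatives, the trace inequality yields bounds like $h_E^3\|\Vn(u-\pi_h u)\|_{0,E}^2\lesssim h_K^2\|u-\pi_h u\|_{3,K}^2+h_K^4\|u-\pi_h u\|_{4,K}^2$ and similarly $h_E\|\mnn(u-\pi_h u)\|_{0,E}^2\lesssim \|u-\pi_h u\|_{2,K}^2+h_K^2\|u-\pi_h u\|_{3,K}^2$; the corner terms $h_i^2(\llbracket\mns(u-\pi_h u)\rrbracket|_{c_i})^2$ are handled analogously using a point-trace (or one-dimensional trace along the two incident edges) inequality, costing an extra half power of $h$ but still controlled by $\|u-\pi_h u\|_{2}$- and $\|u-\pi_h u\|_{3}$-type seminorms on the elements around $c_i$. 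Summing all contributions and inserting \eqref{eq:interp} with $r$ ranging over the relevant values up to $\min(4,s)$ — which is why the hypothesis $s>7/2$ is needed, so that $\Vn(u)|_E$ and the pointwise moment jumps make sense and the worst term $h^{s-4}\cdot h^{?}$ still carries the clean rate $h^{s-2}$ — gives $\enorm{u-\pi_h u}_h\lesssim h^{s-2}\|u\|_s$, and \eqref{eq:cea} finishes the argument.

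The main obstacle is the bookkeeping for the two highest-order edge terms and the corner terms: one must check that applying the scaled trace inequality to $\Vn(u-\pi_h u)$ (third-order) and to the pointwise jump $\llbracket\mns(u-\pi_h u)\rrbracket$ never produces a seminorm of order exceeding $s$ with a power of $h$ that degrades below $h^{s-2}$. This is exactly where the lower bound $7/2<s$ enters — it guarantees $u\in H^s$ with $s$ large enough that $H^{s-1/2}$-traces of $\Vn(u)$ and pointwise evaluation of $\mns(u)$ at corners are well defined, so the consistency already proved is compatible with these terms, and that \eqref{eq:interp} can be applied at the level of $r=4$ whenever $s\ge 4$ (and with $r=s$ when $7/2<s<4$, using that $\pi_h$ still reproduces enough polynomials). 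Once the correct powers of $h_K$ are matched in each term, the summation over $K$, $E$, and $c_i$ is routine, and the independence of all constants from $\tj,\kj,\eci$ follows because every denominator $\tjE+h_E^3$, $\kjE+h_E$, $\eci+h_i^2$ was bounded below purely by its mesh-dependent part.
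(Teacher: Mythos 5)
Your proposal is correct and follows essentially the same route as the paper, which proves the quasi-optimality bound \eqref{eq:cea} from stability, consistency and continuity and then simply invokes standard interpolation theory in the mesh-dependent norm $\enorm{\,\cdot\,}_h$ (the regularity threshold $s>7/2$ entering exactly as you say, to give meaning to the edge traces of $\Vn$ and the corner values of $\mns$). Your write-up merely makes explicit the scaled trace/interpolation bookkeeping that the paper leaves implicit, so no substantive difference.
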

\begin{rem} The regularity assumption $s> 7/2$ stems from the use of the mesh-dependent norm $\enorm{\,\cdot\,}_h$. When  Nitsche's method is applied to the Poisson problem, the corresponding assumption can be avoided, cf. \cite{LJS}. Similar approach could probably be used for the plate problem as well, but it is bound to be very technical and we did not attempt to carry it out. However, numerical computations with less regular solutions lead to optimal convergence rates also if $ s\leq 7/2$. 
\end{rem}
\section{A posteriori error analysis}

\label{sec:aposteriori}

The local error estimators are defined through
\begin{alignat}{2}\label{estimators}
    \eta_K^2(v) &= h_K^4 \| D \Delta^2 v - f \|_{0,K}^2\quad &&\forall K \in \Ch,\\
    \eta_{V,E}^2(v) &= h_E^{3} \| \llbracket \Vn(v) \rrbracket \|_{0,E}^2\quad &&\forall E \in \Eh,\\
    \eta_{M,E}^2(v) &= h_E \| \llbracket \mnn(v) \rrbracket \|_{0,E}^2\quad &&\forall E \in \Eh,\\
    \eta_{v,E}^2(v) &= \frac{h^{3}_E}{( \tjE + h_E^3)^2} \left\|  R^v_E(v)  \right\|_{0,E}^2 \quad &&\forall E \in \Gh,\\
    \eta_{r,E}^2(v) &= \frac{h_E}{( \kjE + h_E)^2} \left\| R^r_E(v)  \right\|_{0,E}^2 \quad &&\forall E \in \Gh,
    \\ \eta_{c,i} ^2(v) &= \frac{h_i^2}{(\eci+h_i^2)^2}    \big(   R^c_i (v)  \big)^2  &&i=1,\dots, m,
\end{alignat}
for any $v \in V_h$,
and the global error estimator $\eta_h$ reads as
\begin{equation}\begin{aligned}
  \displaystyle \eta_h^2(u_h) = &\sum_{K\in \mathcal{C}_h} \eta_K^2(u_h) +   \sum_{E\in \mathcal{E}_h} (\eta_{M,E}^2(u_h) + \eta_{V,E}^2(u_h)) \\
  &+ \sum_{E\in \Gh}  (\eta_{v,E}^2(u_h) + \eta_{r,E}^2(u_h))+\sum_{i=1}^m \eta_i(u_h)^2.
\end{aligned}
\end{equation}
In order to prove the reliability of the error estimator, we will use the following  assumption, justified by the a priori estimate for a regular enough solution.
\begin{ass}[Saturation assumption] There exists $0 <\beta <1$ such that
\begin{equation}
\|u-u_{h/2}\|_{h/2} \leq \beta  \|u-u_{h}\|_{h} ,
\end{equation}
    where $u_{h/2}\in V_{h/2}$ is the solution on the mesh $\mathcal{C}_{h/2}$ obtained by splitting the elements of the mesh  $\mathcal{C}_{h}$.
    \label{satu}
\end{ass}

\begin{thm}[Reliability]
If Assumption \ref{satu} holds true, then we have the estimate
\begin{equation}
\|u-u_h\|_{h} \lesssim \eta_h(u_h) \, .
\label{upperbound}
\end{equation}
\label{thm:aposteriori}
\end{thm}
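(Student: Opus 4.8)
The plan is to exploit the saturation assumption to replace the unknown error $\|u-u_h\|_h$ by the computable difference $\|u_{h/2}-u_h\|_{h/2}$, and then to bound that difference by the estimator using the coercivity and consistency of $\Bh$ on the fine mesh. First I would note that by the triangle inequality and Assumption~\ref{satu},
\begin{equation*}
\|u-u_h\|_h \le \|u-u_{h/2}\|_{h/2} + \|u_{h/2}-u_h\|_{h/2} \le \beta\|u-u_h\|_h + \|u_{h/2}-u_h\|_{h/2},
\end{equation*}
where in the first step I also use that $\|\cdot\|_{h/2}\ge\|\cdot\|_h$ up to a constant on the coarser data (or, more carefully, that the coarse-mesh norm is controlled by the fine-mesh norm on functions measured against $u$, since $\Gh[h/2]\supset\Gh$ and the weights only decrease). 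Absorbing $\beta<1$ gives $\|u-u_h\|_h \lesssim \|u_{h/2}-u_h\|_{h/2}$, so it suffices to bound the latter by $\eta_h(u_h)$.

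Next, set $e_{h/2}=u_{h/2}-u_h\in V_{h/2}$. By Theorem~\ref{thm:stab} applied on $\mathcal{C}_{h/2}$ (the hypothesis $2\gamma<C_I^{-1}$ is mesh-independent, so it carries over), $\|e_{h/2}\|_{h/2}^2 \lesssim \Bh[h/2](e_{h/2},e_{h/2})$. Since $u_{h/2}$ solves the fine-mesh problem and, by the Consistency theorem, the exact solution $u$ also satisfies $\Bh[h/2](u,v)=\Lh[h/2](v)$ for all $v\in V_{h/2}$, we get $\Bh[h/2](e_{h/2},v)=\Bh[h/2](u_h-u,v)$ for all $v\in V_{h/2}$. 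The key step is then to rewrite $\Bh[h/2](u_h-u,v)$ as a sum of residual terms evaluated at $u_h$, tested against $v$ and its derivatives/traces. Concretely, using the definition of $\Bh$ together with the integration-by-parts identity~\eqref{intbyparts} applied elementwise on $\mathcal{C}_{h/2}$, the volume part contributes $\sum_K (D\Delta^2 u_h - f, v)_K$ plus the interior-edge jump terms $\sum_{E\in\Eh[h/2]}$ involving $\llbracket\Vn(u_h)\rrbracket$ and $\llbracket\mnn(u_h)\rrbracket$ (these appear because $u_h$ is only $H^2$, not smooth enough to kill the jumps a priori — but note $V_h\subset H^2$ so the relevant interior jumps across the original edges of $\mathcal{C}_h$ that were not refined must be handled; since $u_h\in V_h\subset C^1$, $\llbracket\dn{u_h}\rrbracket=0$ and only the $\Vn,\mnn$ jumps survive), while the boundary and corner parts of $\Bh$ reassemble into the residuals $\Rv(u_h)$, $\Rr(u_h)$, $\Rc(u_h)$ tested against the appropriate traces of $v$.

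Having written $\Bh[h/2](e_{h/2},e_{h/2}) = \sum(\text{residual of }u_h)\cdot(\text{trace of }e_{h/2})$, I would apply Cauchy--Schwarz edge-by-edge and element-by-element, inserting the scaling weights $h_K^2$, $h_E^{3/2}/(\tjE+h_E^3)$, $h_E^{1/2}/(\kjE+h_E)$, $h_i/(\eci+h_i^2)$ so that one factor is exactly $\eta_h(u_h)$ and the other is a weighted norm of $e_{h/2}$. The latter is controlled by $\|e_{h/2}\|_{h/2}$: the volume terms by a standard inverse/trace inequality ($h_K^{-2}\|v\|_{0,K}\lesssim \|v\|_h$ uses $H^2$-regularity of $v$ and the Poincaré--Friedrichs-type estimate built into $a(\cdot,\cdot)$ plus the boundary contributions), the interior-edge jump terms by the inverse estimate~\eqref{inverse} on $\mathcal{C}_{h/2}$, and the boundary/corner terms directly from the definition of $\|\cdot\|_{h/2}$. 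Dividing through by $\|e_{h/2}\|_{h/2}$ yields $\|e_{h/2}\|_{h/2}\lesssim \eta_h(u_h)$, and combining with the first paragraph completes the proof.

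The main obstacle I expect is the identification of $\Bh[h/2](u_h-u,v)$ with the residual-against-$v$ sum, i.e.\ making the elementwise integration by parts bookkeeping on the refined mesh rigorous: one must track that the interior-edge contributions along the newly created edges cancel the extra moment/shear terms except for the genuine jumps, that the $C^1$-conformity of $V_h$ eliminates the rotation jumps but the shear and normal-moment jumps of $u_h$ genuinely remain and must be estimated, and that the corner terms combine with the right sign. A secondary technical point is justifying $\|\cdot\|_h \lesssim \|\cdot\|_{h/2}$ (needed in the first paragraph for functions not in any $V_h$, namely $u-u_h$), which requires comparing the two families of edge/corner weights under refinement — since refinement only shrinks $h_E$ and splits edges, each coarse-mesh weighted boundary term is dominated by the sum of the corresponding fine-mesh terms, and $a(\cdot,\cdot)$ is mesh-independent, so this goes through but deserves an explicit line.
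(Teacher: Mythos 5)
Your first reduction (triangle inequality, saturation, monotonicity of the mesh-dependent norms under refinement, and coercivity of $\mathcal{A}_{h/2}$ to pass to the bilinear form) follows the paper. The genuine gap is in the core estimation step. After Galerkin orthogonality you test the residual identity directly with $v=e_{h/2}$ and claim that the dual factors appearing after the weighted Cauchy--Schwarz step, namely $\sum_K h_K^{-4}\|v\|_{0,K}^2$, $\sum_E h_E^{-3}\|v\|_{0,E}^2$ and $\sum_E h_E^{-1}\|\dnt{v}\|_{0,E}^2$, are controlled by $\|v\|_{h/2}^2$ ``by a standard inverse/trace inequality ($h_K^{-2}\|v\|_{0,K}\lesssim\|v\|_h$)''. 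This bound is false: inverse inequalities run in the opposite direction, and for a fixed smooth $v$ (say, a bubble vanishing to high order on $\partial\Omega$) the left-hand side blows up like $h^{-2}$ while $\|v\|_h$ stays bounded. There is no way to absorb negative powers of $h$ acting on $v$ itself into the energy-type norm, so your Cauchy--Schwarz step cannot be closed as written. This is exactly why the paper does \emph{not} test with $e_{h/2}$ directly: it introduces the Hermite interpolant $\tilde v\in V_h$ of the normalized test function $v\in V_{h/2}$, splits $\mathcal{A}_{h/2}(u_{h/2}-u_h,v)$ into the parts tested with $w=v-\tilde v$ and with $\tilde v$, and uses the interpolation estimate \eqref{hermite}, which supplies precisely the $h_K^{-4}$-, $h_E^{-3}$- and $h_E^{-1}$-weighted control of $w$ that your argument lacks for $v$.

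Two further points are tied to this omission. First, in the elementwise integration by parts \eqref{A2} the vertex contributions $\llbracket \mns(u_h)\rrbracket$ at the nodes of $\Ch$ disappear only because the Hermite error $w$ vanishes at those nodes; with a generic $v\in V_{h/2}$ these terms survive and are not part of the estimator, so your bookkeeping ``the corner terms combine with the right sign'' does not resolve itself. Second, once the interpolant is introduced one must still estimate the remaining piece $\mathcal{A}_{h/2}(u_{h/2}-u_h,\tilde v)$, which the paper rewrites as $\R_{h}(u_h,\tilde v)-\R_{h/2}(u_h,\tilde v)$ and bounds using the near-cancellation of the weights $1/(\tjE+\gamma h_E^3)$, $1/(\kjE+\gamma h_E)$, $1/(\eci+\gamma h_i^2)$ on refined edges ($h_F=2h_E$) together with $\|\tilde v\|_{h/2}\lesssim 1$; this whole component is absent from your plan. (The sign slip $\mathcal{A}_{h/2}(e_{h/2},v)=\mathcal{A}_{h/2}(u-u_h,v)$, not $\mathcal{A}_{h/2}(u_h-u,v)$, is immaterial.) To repair the proof you should insert the interpolant and follow the two-term splitting; the rest of your outline then matches the paper's argument.
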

\begin{proof}
From the coercivity of the bilinear form $ \A_{h/2}$ and the saturation assumption, it follows that
\begin{equation}
\|u-u_h\|_h\leq \frac{1}{1-\beta}  \|u_{h/2}-u_{h}\|_h \lesssim  \A_{h/2}(u_{h/2}-u_h,v)\, ,
\end{equation}
for some $v\in V_{h/2}$ such that $\|v\|_{h/2} =1$. 
Let  $\tilde{v}\in V_h$ be  the Hermite interpolant of $v\in V_{h/2}$.  We have the following estimates
\begin{equation}
\begin{aligned}
& \sum_{K\in \mathcal{C}_{h/2}} h_K^{-4} \| v-\tilde{v} \|_{0,K}^2
+  \sum_{E\in \mathcal{G}_{h} \cup \mathcal{E}_{h/2} } \Big\{ h_E^{-1}  \|\nabla(v-\tilde{v})\|_{0,E}^2 +
h_E^{-3}  \|v-\tilde{v}\|_{0,E}^2 \Big\} \\
& + \sum_{E\in \mathcal{G}_{h/2} }  \bigg\{  h_E^3\|V_n(v-\tilde{v})\|_{0,E}^2+ h_E\|M_{nn}(v-\tilde{v})\|_{0,E}^2
 \\ 
& \hspace{1.7cm} + \frac{1}{ \tjE+h_E^3} \|v-\tilde{v}\|_{0,E}^2 + \frac{1}{ \tjE+h_E} \left\|\dn{(v-\tilde{v})}\right\|_{0,E}^2\, \bigg\} \\
 &+\sum_{i=1}^m h_i^2 \big( \llbracket \mns(v-\tilde{v}) \rrbracket|_{c_i}  \big)^2 
\\ & \leq C\, \|v\|_{h/2}^2 \lesssim 1 ,
\end{aligned} 
\label{hermite}
\end{equation}
and 
\begin{equation}
\|\tilde{v}\|_{h/2}\lesssim \|  v   \|_{h/2} \lesssim \|v\|_{h} \lesssim 1 .
\end{equation}
Let $w=v-\tilde{v}$ and write
\begin{equation}   
  \A_{h/2}(u_{h/2}-u_{h},v) =   \A_{h/2}(u_{h/2}-u_h,w) +   \A_{h/2}(u_{h/2}-u_h,\tilde{v}).\label{estim1}\end{equation}
To estimate the first term in \eqref{estim1}, we write it as
\begin{equation}\label{A1}
\begin{aligned}
   \A_{h/2}(u_{h/2}-u_h,w)& = 
    \A_{h/2}(u_{h/2},w)-   \A_{h/2}(u_h,w)
 \\& = (f,w)  -\Bf(u_h,w)-\R_{h/2} (u_h, w)
 .
\end{aligned}
\end{equation}
A repeated partial integration,  and the fact that $w$ vanishes at the nodes of $\Ch$ gives  \begin{equation}\label{A2}
\begin{aligned}
&(f,w) - \Bf(u_h,w) \\
&= \sum_{K\in \mathcal{C}_{h}} \Big\{(f-D\Delta^2 u_h,w)_{K} -( V_n(u_h),w)_{\partial K} + \left(M_{nn}(u_h), \dnt{w}\right)_{\partial K}\Big\} \\ 
&=  \sum_{K\in \mathcal{C}_{h}} (f-D\Delta^2 u_h,w)_{K}  \\ 
& \phantom{=}\,+  \sum_{E\in \mathcal{E}_{h}}\Big\{ - ( \llbracket \Vn(u_h) \rrbracket , \,w)_{E} +\left( \llbracket M_{nn}(u_h)\rrbracket , \dnt{w}\right)_{E}\Big\} .
\end{aligned}
\end{equation}
Recalling that $w=v-\tilde v$,   estimate \eqref{hermite} leads to the bounds
\begin{equation}
\begin{aligned}
&\sum_{K\in \mathcal{C}_{h}}  (f-D\Delta^2 u_h,w)_{K}    
\\ 
& \leq \left(  \sum_{K\in \mathcal{C}_{h}} h_K^4 \| D \Delta^2 u_h - f \|_{0,K}^2\right)^{1/2} \left( \sum_{K\in \Ch} h_K^{-4} \Vert w \Vert_{0,K}^2\right)^{1/2}
 \\ &  \lesssim \left(  \sum_{K\in \mathcal{C}_{h}} h_K^4 \| D \Delta^2 u_h - f \|_{0,K}^2\right)^{1/2} \lesssim \eta_h(u_h) ,
 \end{aligned}
\end{equation}
and 
\begin{equation}
\begin{aligned}
  &\sum_{E\in \mathcal{E}_{h}}   \Big\{  ( \llbracket \Vn(u_h) \rrbracket , \,w)_{E}  +\left( \llbracket M_{nn}(u_h)\rrbracket , \dnt{w}\right)_{E}\Big\} 
  \\
  &\leq \left( \sum_{E\in \mathcal{E}_{h}} h_E^{3} \| \llbracket \Vn(u_h) \rrbracket \|_{0,E}^2\right)^{1/2}
    \left( \sum_{E\in \mathcal{E}_{h}} h_E^{-3} \|  w  \|_{0,E}^2\right)^{1/2}
    \\
    &\phantom{=}~+  
  \left( \sum_{E\in \mathcal{E}_{h}} h_E \| \llbracket \mnn(u_h) \rrbracket \|_{0,E}^2 \right)^{1/2} 
  \left( \sum_{E\in \mathcal{E}_{h}} h_E^{-1}\left\| \dn{w}  \right\|_{0,E}^2\right)^{1/2}
  \\
  &\lesssim \left( \sum_{E\in \mathcal{E}_{h}} h_E^{3} \| \llbracket \Vn(u_h) \rrbracket \|_{0,E}^2\right)^{1/2}+  
  \left( \sum_{E\in \mathcal{E}_{h}} h_E \| \llbracket \mnn(u_h) \rrbracket \|_{0,E}^2 \right)^{1/2}
  \\  &\lesssim \eta_h(u_h) .
\end{aligned}
\end{equation}
Moreover, using the Schwarz inequality on each $E \in \mathcal{G}_{h/2}$, the Cauchy inequality for sums, and estimate \eqref{hermite}, we get
\begin{equation}
-\R_{h/2}(u_h,w) \lesssim \eta_{h/2}(u_h) \lesssim \eta_h(u_h).
\end{equation}

Next, we consider the second  term in \eqref{estim1}. First, we note that   
\begin{equation}
 \begin{aligned}
     \A_{h/2}(u_{h/2}-u_h,\tilde{v})&=    \A_{h/2}(u_{h/2},\tilde{v})-   \A_{h/2}(u_h,\tilde{v})
    \\
    &= \R_{h}(u_h, \tilde v)-\R_{h/2}(u_h, \tilde v).
  \end{aligned}  
  \end{equation}
  For an edge $E \in \mathcal{G}_{h/2}$ such that $E\subset F$, with $F\in \mathcal{G}_h$, it holds 
$h_F=2h_E$. Thus we get
\begin{equation}
\begin{aligned}
&\R_{h}(u_h, \tilde v)-\R_{h/2}(u_h, \tilde v)\\
&= 
  \sum_{E\in \mathcal{G}_{h/2}} \bigg\{    -  \frac{7 \gamma h_E^3}{( \tjE + \gamma h_E^3)( \tjE + 8\gamma   h_E^3)} (\Rv(u_h), \tilde v)_E 
\\&\hspace{1.8cm}
+   \frac{7\tjE h_E^3}{ ( \tjE + \gamma h_E^3)( \tjE + 8\gamma  h_E^3)}\big(\Rv(u_h), \Vn(\tilde v)\big )_E
\\
&
\hspace{1.8cm}+    
    \frac{\gamma h_E}{ (\kjE + \gamma h_E)(\kjE +2 \gamma h_E)} \big(\Rr(u_h),\dnt{\tilde v}\big)_E
 \\
 &\hspace{1.8cm}    -   \frac{\gamma \kjE h_E}{(\kjE + \gamma h_E)(\kjE +2 \gamma h_E)}\big(\Rr(u_h), \mnn(\tilde v)\big )_E  \bigg\}
\\ 
  &\phantom{=}~+\sum_{i=1}^m \bigg\{  -\frac{3\gamma h_i^2}{( \eci + \gamma h_i^2)( \eci +4 \gamma h_i^2)}\Rc(u_h) v(c_i)
 \\
 & \hspace{1.65cm} +   \frac{-3\gamma \eci  h_i^2}{( \eci + \gamma h_i^2)( \eci +4 \gamma h_i^2)}\Rc(u)\llbracket \mns(\tilde v)\rrbracket |_{c_i} \bigg\}.
\end{aligned}
\end{equation}
The first term above we estimate as follows:
\begin{equation}
\begin{aligned}
&\Bigg\vert \sum_{E\in \mathcal{G}_{h/2}}      \frac{7 \gamma h_E^3}{( \tjE + \gamma h_E^3)( \tjE + 8\gamma   h_E^3)} (\Rv(u_h), \tilde v)_E\Bigg\vert
\\
& \lesssim
 \sum_{E\in \mathcal{G}_{h/2}}       \frac{  h_E^3}{( \tjE +   h_E^3)^2} \Vert \Rv(u_h)\Vert_{0,E} \Vert  \tilde v\Vert_{0,E}
 \\
 & \lesssim \left( \sum_{E\in \mathcal{G}_{h/2}}       \frac{  h_E^3}{( \tjE +   h_E^3)^2} \Vert \Rv(u_h)\Vert_{0,E}^2\right)^{1/2}
  \left(\sum_{E\in \mathcal{G}_{h/2}}       \frac{  h_E^3}{( \tjE +   h_E^3)^2}  \Vert  \tilde v\Vert_{0,E}^2\right)^{1/2}
  \\
  & \lesssim \left( \sum_{E\in \mathcal{G}_{h/2}}       \frac{  h_E^3}{( \tjE +   h_E^3)^2} \Vert \Rv(u_h)\Vert_{0,E}^2\right)^{1/2}
  \left(\sum_{E\in \mathcal{G}_{h/2}}       \frac{ 1}{( \tjE +   h_E^3)}  \Vert  \tilde v\Vert_{0,E}^2\right)^{1/2}
  \\
  &\lesssim \eta_{h/2} (u_h) \|\tilde{v}\|_{h/2} \lesssim \eta_h(u_h).
\end{aligned}
\end{equation}
The other terms are estimated in the same way.
Now, estimating separately each term above, we conclude that
\begin{equation}
\R_{h}(u_h, \tilde v)-\R_{h/2}(u_h, \tilde v)   \lesssim \eta_h(u_h).
\end{equation}
The claim is now proved by collecting the estimates.
  \end{proof}

Next we turn to the lower bounds.
We denote by $\omega_E$ the union of two elements
that have $E \in \Eh$ as one of their edges, and by $K(E)$ the
element which has $E \in \Gh$ as one of its edges.
The data oscillations are defined as
\begin{align*}
  \osc_K(f) &= h_K^2 \| f - \fh \|_{0,K}, \\
  \osc_{v,E}(\gvE) &= \frac{h_E^{3/2}}{ \tjE + h_E^3} \|  \tjE (\gvE - \gvh)    \|_{0,E}, \\
  \osc_{r,E}(\gmE ) &= \frac{h_E^{1/2}}{ \kjE + h_E} \|  \kjE (\gmE - \grh)   \|_{0,E},
\end{align*}
where $\fh, \, \gvh, \, \grh$ are polynomial approximations to $f, \, \gvE$ and $\gmE$, respectively.

\begin{thm}[Efficiency]
  For all $v \in V_h$ it holds
  \begin{alignat}{2}
    \label{eq:lowboundint} \eta_K(v) &\lesssim |u-v|_{2,K} + \osc_K(f) &&K\in \Ch, \\
    \label{eq:edgebound1} \eta_{V,E}(v) & \lesssim |u-v|_{2,E} + \sum_{K \subset \omega_E} \osc_K(f) && E\in \Eh, \\
    \label{eq:edgebound2}  \eta_{M,E}(v) &\lesssim |u-v|_{2,\omega_E} + \sum_{K \subset \omega_E} \osc_K(f) && E\in \Eh, \\
    \label{eq:edgebound3} \eta_{{v,E}}(v) &\lesssim |u-v|_{2,\omega_E} + \frac{1}{\sqrt{ \tjE + h_E^3}}\|u - v\|_{0,E}\\
    &\quad+ \osc_{K(E)}(f) +  \osc_{v,E}(\gvE) && E\in \Gh, \nonumber \\
    \label{eq:edgebound4} \eta_{r,E}(v) &\lesssim |u-v|_{2,\omega_E} + \frac{1}{\sqrt{ \kjE + h_E}}\left\|\dn{(u - v)}\right\|_{0,E}\quad\\
    &\quad+ \osc_{K(E)}(f) +  \osc_{r,E}(\gmE) && E\in \Gh.\nonumber
  \end{alignat}
  \label{thm:lower}
\end{thm}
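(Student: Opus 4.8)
The plan is to prove each of the six local lower bounds by the standard bubble-function technique of Verfürth, adapted to the fourth-order operator and to the Robin-type boundary residuals. For the interior estimator $\eta_K$, I would introduce the interior bubble $b_K$ supported in $K$, test the equation $D\Delta^2 u = f$ against $b_K(f_h - D\Delta^2 v)$ (a polynomial times the bubble), use the norm equivalence $\|b_K^{1/2} q\|_{0,K} \simeq \|q\|_{0,K}$ on the finite-dimensional polynomial space, integrate by parts four times so that all boundary terms vanish (the bubble and its gradient vanish on $\partial K$), and thereby bound $\|f_h - D\Delta^2 v\|_{0,K}$ by $h_K^{-2}|u-v|_{2,K}$; the triangle inequality with $\|f - f_h\|_{0,K}$ then yields \eqref{eq:lowboundint}. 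For the interior edge estimators \eqref{eq:edgebound1} and \eqref{eq:edgebound2}, I would use edge bubbles $b_E$ on $\omega_E$, test against $b_E$ times (an extension of) the jump $\llbracket \Vn(v)\rrbracket$, respectively $\llbracket M_{nn}(v)\rrbracket$, and integrate by parts on the two elements of $\omega_E$; the element residuals produced in this process are reabsorbed using the already-proven bound \eqref{eq:lowboundint}, which is why the oscillation $\osc_K(f)$ appears on the right-hand side.

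For the boundary estimators \eqref{eq:edgebound3} and \eqref{eq:edgebound4}, the structure is similar but the residuals $R_E^v(v) = \tjE(\Vn(v)-\gvE) + v$ and $R_E^r(v) = \kjE(\mnn(v)-\gmE) - \dn{v}$ mix a ``flux'' part and a ``trace'' part. I would split each residual accordingly: the term $\Vn(v)$ (resp.\ $\mnn(v)$) is handled by a boundary-edge bubble on $K(E)$, integration by parts against $D\Delta^2 u = f$, and reabsorption of the resulting element residual via \eqref{eq:lowboundint} — this is where $\osc_{K(E)}(f)$ enters; the exact boundary condition \eqref{altbcshear}, namely $\tjE \Vn(u) + u = \tjE \gvE$, is used to replace $\tjE\Vn(v)+v$ by $\tjE(\Vn(v)-\Vn(u)) + (v-u)$ plus the data term $\tjE(\gvE - \gvh)$, which becomes $\osc_{v,E}(\gvE)$ after scaling. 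The remaining trace contribution $v - u$ (resp.\ $\dn{(v-u)}$) is what produces the explicit terms $(\tjE + h_E^3)^{-1/2}\|u-v\|_{0,E}$ and $(\kjE+h_E)^{-1/2}\|\dn{(u-v)}\|_{0,E}$ in \eqref{eq:edgebound3}–\eqref{eq:edgebound4}. Throughout, I would keep track of the weights $h_E^3/(\tjE+h_E^3)^2$ and $h_E/(\kjE+h_E)^2$ from the definitions of $\eta_{v,E}$, $\eta_{r,E}$ and verify that the scaling exponents in the bubble-function inverse and trace inequalities match these weights uniformly in $\tjE,\kjE$; since $\gamma$ does not enter the definition of $\eta_h$ (the weights there use $\tjE + h_E^3$, not $\tjE + \gamma h_E^3$), the constants are $\gamma$-independent.

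The main obstacle I anticipate is the careful bookkeeping of the parameter-dependent weights in the boundary estimates: one must check that the bubble/trace/inverse inequalities produce exactly the right powers of $h_E$ so that, for instance, the flux contribution $h_E^{3/2}(\tjE+h_E^3)^{-1}\|h_E^{3/2}\Vn(v-u)\|$-type terms collapse into $|u-v|_{2,\omega_E}$ uniformly whether $\tjE \ll h_E^3$ or $\tjE \gg h_E^3$ — the two regimes correspond morally to Dirichlet-like and Neumann-like behaviour and must both be covered by a single scaling argument. A secondary technical point is the normal-derivative edge bubble needed for the $M_{nn}$-residual in \eqref{eq:edgebound4}: one needs a function on $\omega_E$ whose normal derivative on $E$ reproduces a prescribed polynomial while the function itself and its tangential derivative vanish on $E$, which is available for the $C^1$ elements under consideration but requires the usual affine-equivalence scaling argument to control. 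All of these are routine in spirit; I would organize the proof as five short paragraphs, one per estimate (with \eqref{eq:edgebound1} and \eqref{eq:edgebound2} sharing most of their argument), each following the test–integrate-by-parts–reabsorb template above.
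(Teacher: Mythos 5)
Your proposal follows essentially the same route as the paper: the bounds \eqref{eq:lowboundint}--\eqref{eq:edgebound2} are the standard bubble-function estimates (which the paper does not reprove but cites from earlier work), and for the boundary estimators the paper likewise splits off the data oscillation by the triangle inequality, multiplies the (data-approximated) residual by a special weight supported in $K(E)$ (an eighth-degree bubble with vanishing normal derivative on $\partial K(E)$, and for \eqref{eq:edgebound4} one with prescribed normal derivative on $E$), uses norm equivalence and scaling, and reabsorbs the resulting element residual $h_E^{2}\|D\Delta^2 v-f\|_{0,K(E)}$ together with the trace terms exactly as you describe. The one point worth noting is that you eliminate the data by inserting the strong boundary condition $\tjE \Vn(u)+u=\tjE\gvE$ and integrating by parts against $D\Delta^2 u=f$, which presupposes traces and local regularity of $u$ beyond $H^2(\Omega)$; the paper carries out the identical algebra through the weak formulation \eqref{heikko} tested with the bubble-weighted residual (only $v$, a polynomial on $K(E)$, is integrated by parts), which yields the same identity without that regularity assumption.
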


\begin{proof}
  The bounds \eqref{eq:lowboundint}, \eqref{eq:edgebound1}, and \eqref{eq:edgebound2} are proved in~\cite{gustafsson-2018-poster-estim}. Let us now consider \eqref{eq:edgebound3}. The triangle inequality gives
\begin{equation}
\label{D0}
    \eta_{{v,E}}(v) \leq \frac{h_E^{3/2}}{ \tjE + h_E^3}\|R_{E,h}^v(v)\|_{0,E} + \osc_{v,E}(\gvE),
\end{equation}
  where
\begin{equation}
  R_{E,h} ^v(v) =  \tjE(\Vn(v) -\gvh) + v .
\end{equation}
  Let $\phi_E$ denote the eight degree polynomial with support in $K(E)$
satisfying
\begin{equation}
\begin{aligned}
  \frac{\partial\phi_E}{\partial \boldsymbol{n}}\Big|_{\partial K(E)} &= 0,
  \\
 \phi_E &> 0 \ \mbox{on } E \mbox{ and in the interior of } K,
 \\
 \phi_E&=0 \ \mbox{on } \partial K(E) \setminus E,
 \\ \max \phi_E&=1.
\end{aligned}
\end{equation}
Denoting $w = \phi_E R_{E,h}^v(v)$ we have
  \begin{equation}\label{D1} 
  \begin{aligned}
    \|R_{E,h}^v\|_{0,E}^2 &\lesssim \| \phi_E^{1/2} R_{E,h}^v \|_{0,E}^2 \\
    &= (R_{E,h}^v, w)_E =(R_{E}^v, w)_E+(\gvE-\gvh,w)_E
     .
  \end{aligned}
  \end{equation}
  Integrating by parts we have 
  \begin{equation}
  (\Vn(v),w)_E=- (D\Delta^2 v, w)_{K(E)}+ (\Mom(v),\Kappa(w))_{K(E)}.
  \end{equation}
  On the other hand, from \eqref{heikko} we get
  \begin{equation}
  (\gvE,w)_E+(f,w)_{K(E)}= (\Mom(u), \Kappa(w))_{K(E)} +\frac{1}{\tjE} (u,w)_{E},
  \end{equation}
  and, hence, it holds that
  \begin{equation}
  \begin{aligned}
(R_{E}^v, w)_{E}=&\,\tjE\big (  (\Mom(v-u),\Kappa(w))_{K(E)}  + (f-D\Delta^2 v, w)_{K(E)} \big)\\&+(v-u,w)_E.
\end{aligned}
  \end{equation}
  By scaling arguments, we have
  \begin{equation}
  \Vert \Kappa( w )\Vert_{K(E)} \lesssim h_E^{-3/2} \Vert   w  \Vert_{0,E}  \lesssim h_E^{-3/2} \Vert  R_{E,h}^v \Vert_{0,E},  
  \end{equation} 
  and
    \begin{equation}
  \Vert w \Vert_{K(E)} \lesssim h_E^{1/2} \Vert   w  \Vert_{0,E}  \lesssim h_E^{1/2} \Vert  R_{E,h}^v \Vert_{0,E}. 
  \end{equation} 
This implies that 
  \begin{equation}
  \begin{aligned}
 \vert (R_{E}^v, w)_{E}\vert \lesssim \Big(  &\tjE \big( h_E^{-3/2} |u - v|_{2,K(E)} + h^{1/2} \| D \Delta^2 v - f \|_{0,K(E)} \big)\\
 &+\Vert u-v \Vert_{0,E}        \Big) \Vert  R_{E,h}^v \Vert_{0,E}.
 \label{est541}
 \end{aligned}
   \end{equation}
 From \eqref{est541} and \eqref{D1} we finally conclude that
 \begin{equation}
 \begin{aligned}
& \frac{h_E^{3/2}}{\tjE+h_E^3}  \Vert R_{E,h}^v \Vert_{0,E}\\
 & \lesssim 
  \frac{\tjE}{\tjE+h_E^3} | u - v|_{2,K(E)} +  \frac{\tjE h_E^{2}}{\tjE+h_E^3}   \| D \Delta^2 v - f \|_{0,K(E)}
  \\ &\phantom{\lesssim}~ + \frac{h_E^{3/2}}{\tjE+h_E^3}  \Vert u-v \Vert_{0,E} + \osc_{v,E}(\gvE) 
  \\
  & 
  \lesssim 
  | u - v |_{2,K(E)} +   h_E^{2} \| D \Delta^2 v - f \|_{0,K(E)}  
  \\&\phantom{\lesssim}~ +  (\tjE+h_E^3)^{-1/2}   \Vert u-v \Vert_{0,E}+\osc_{v,E}(\gvE)  .
  \end{aligned}
   \label{est542}
   \end{equation}
Estimate \eqref{est542} together with \eqref{D0} and \eqref{eq:edgebound1} leads to the asserted  estimate \eqref{eq:edgebound3}.

The lower bound   \eqref{eq:edgebound4} is proved in an analogous manner using a weight function $\phi_E'$  satisfying
\begin{equation}
\begin{aligned}
  \frac{\partial\phi_E'}{\partial \boldsymbol{n}}\Big|_{E}& >0,
  \\
    \frac{\partial\phi_E'}{\partial \boldsymbol{n}}\Big|_{\partial K(E)\setminus E}&=0,
    \\
 \phi_E'&> 0 \  \mbox{in the interior of } K,
 \\
 \phi_E'&=0 \ \mbox{on } \partial K(E),
 \\ \max \phi_E' &=1.
\end{aligned}
\end{equation}
\\[0.1cm]\phantom{nothing}
\end{proof}

\begin{rem}
We are unable to prove the efficiency of the
corner estimators $\eta_{c,i}$ for all
values $0 \leq \kj, \tj, \eci \leq \infty$, $i=1,\dots,m$.
In particular,
when $\eci \neq 0$ and $\tj$ is close to zero
there seems to be a nontrivial
coupling between $\eta_{c,i}$ and $R^v_E$.
\end{rem}

\section{Computational results}

\label{sec:results}

For numerical experiments,
we implement a finite element solver based on the Argyris element.
Our solver allows enforcing boundary conditions either via the Nitsche
method of Section~\ref{sec:nitsche}
or, in simple cases, via the classical method of
directly eliminating degrees-of-freedom.
In all examples, we consider the square domain
$\Omega = [0,1]^2$ defined by the corner points
\[
c_1 = (0,0), \quad c_2 = (1,0), \quad c_3 = (1,1), \quad c_4 = (0,1).
\]

\subsection{Clamped square plate}

Let $E=1$, $\nu=0.3$, and $d=1$. The analytical solution to the
fully clamped problem ($\kj = \tj = \eci = 0,~i=1,\dots,4$)
with  loading
\begin{align*}
\gvi &= \gmi = \gci = 0,~i=1,\dots,4,\\
f(x,y)&=8 \pi^4 D(\cos^2 \pi x \cos^2 \pi y - 2 \sin^2 \pi x \cos^2 \pi y \\
&\qquad \qquad - 2 \cos^2 \pi x \sin^2 \pi y  + 3 \sin^2 \pi x \sin^2 \pi y),
\end{align*}
reads  as follows
\[
u(x,y)=\sin^2 \pi x \sin^2 \pi y.
\]
To validate our implementation, we solve the problem using a uniform mesh
family for both Nitsche's method with $\gamma=10^{-3}$ and the classical
method---the meshes and the solutions are given in Figure~\ref{fig:ex1}.  The
approximate deflections $u_h(1/2, 1/2)$ presented in Table~\ref{tab:ex1}
show how the exact maximum deflection $u(1/2, 1/2)=1$
is reproduced with high
accuracy by both approaches.

\begin{figure}
  \includegraphics[width=0.191\textwidth]{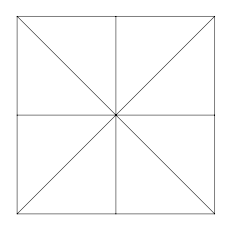}
  \hspace{0.59cm}
  \includegraphics[width=0.191\textwidth]{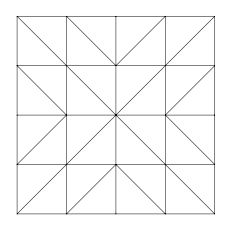}
  \hspace{0.59cm}
  \includegraphics[width=0.191\textwidth]{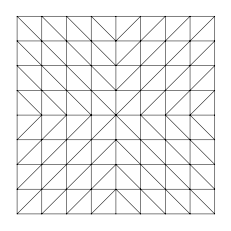}
  \hspace{0.59cm}
  \includegraphics[width=0.191\textwidth]{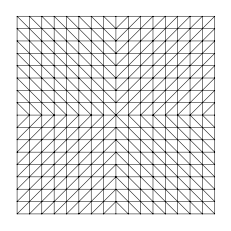}
  \\[0.5cm]
  \includegraphics[width=0.245\textwidth]{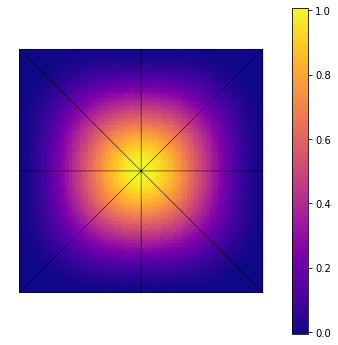}
  \includegraphics[width=0.245\textwidth]{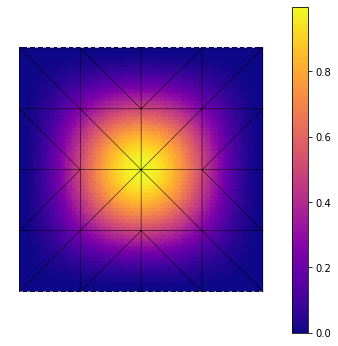}
  \includegraphics[width=0.245\textwidth]{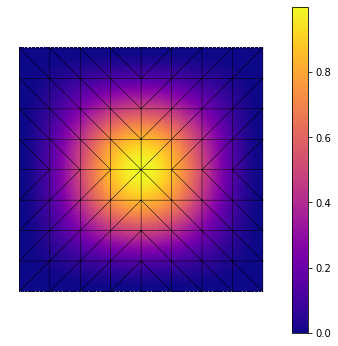}
  \includegraphics[width=0.245\textwidth]{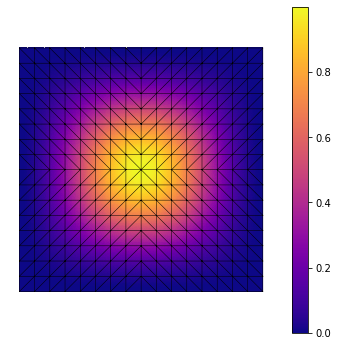}
  \\[0.2cm]
  \includegraphics[width=0.245\textwidth]{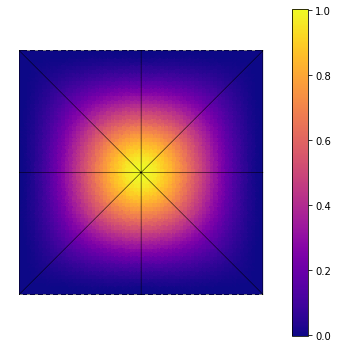}
  \includegraphics[width=0.245\textwidth]{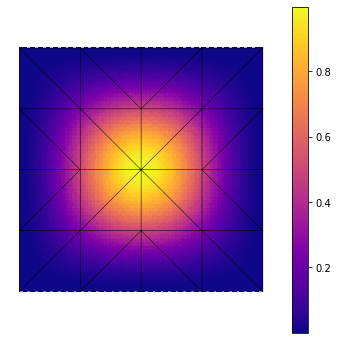}
  \includegraphics[width=0.245\textwidth]{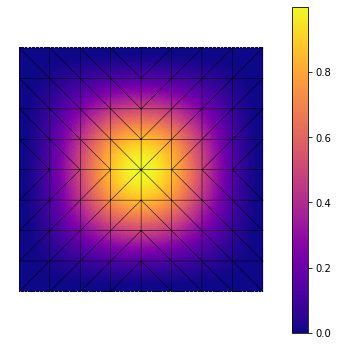}
  \includegraphics[width=0.245\textwidth]{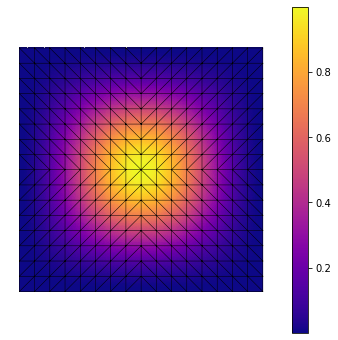}
  \caption{Mesh sequence (top row) and the deflections computed with the classical method (middle row) and
  Nitsche's method (bottom row).  The source code for reproducing
  these results is available in~\cite{tom_gustafsson_2020_3925365}.}
  \label{fig:ex1}
\end{figure}

\begin{table}
  \caption{Pointwise deflections in the mid point of the clamped square plate.}
  \label{tab:ex1}
  \centering
  \pgfplotstabletypeset[
    col sep=comma,
    sci zerofill={true},
    precision=7,
    columns={h,nitsche,trad},
    columns/h/.style={
     column name={$h$},
    },
    columns/nitsche/.style={
     column name={Nitsche, $u_h(1/2, 1/2)$},
    },
    columns/trad/.style={
     column name={traditional, $u_h(1/2, 1/2)$},
    },
  ]{ex1/ex1results.csv}
\end{table}

Continuing only with the Nitsche method, we calculate the discrete norm $\|u-u_h\|_h$
and the following elementwise a posteriori error indicator:
\begin{align*}
  E_K(u_h) = &\,h_K^2 \| D \Delta^2 u_h - f \|_{0,K} + \frac12 h_K^{3/2} \|
  \llbracket \Vn(u_h) \rrbracket \|_{0,\partial K} \\ &+ \frac12 h_K^{1/2} \|
  \llbracket \mnn(u_h) \rrbracket \|_{0,\partial K} + h^{3/2}_K \| u_h
  \|_{0,\partial K \cap \partial \Omega} \\
  &+ h^{1/2}_K \left\| \dn{u_h} \right\|_{0,\partial K \cap \partial \Omega}
   + h_K^{-1} \sum_{i = 1}^4 u_h(c_i) \chi_K(c_i),
\end{align*}
where
\begin{equation}
\chi_K(x) =
    \begin{cases}
    1 & \text{if $x \in K$},\\
    0 & \text{otherwise}.
    \end{cases}
\end{equation}
The results are summarized in
Table~\ref{tab:ex2}.  We observe that the convergence rates
are consistent with the expected rate $\mathcal{O}(h^4)$
for fifth degree polynomials and regular solutions.
Moreover, the error indicator converges
with similar rates as the true error
which is also a consequence of Theorems~\ref{thm:aposteriori}
and \ref{thm:lower}.

\begin{table}
  \caption{Convergence of the error and the error estimator.}
  \label{tab:ex2}
  \centering
  \pgfplotstabletypeset[
    create on use/rate/.style={
      create col/dyadic refinement rate={nitschemd},
    },
    create on use/rate2/.style={
      create col/dyadic refinement rate={estimator},
    },
    columns={h,nitschemd,rate,estimator,rate2},
    col sep=comma,
    sci zerofill={true},
    precision=7,
    columns/h/.style={
     column name={$h$},
    },
    columns/nitschemd/.style={
     column name={$\|u-u_h\|_h$},
    },
    columns/rate/.style={
      dec sep align,
    },
    columns/estimator/.style={
      column name={$\sqrt{\sum_{K \in \Ch} E_K^2(u_h)}$},
    },
    columns/rate2/.style={
      column name={rate},
      dec sep align,
    },
  ]{ex1/ex1results.csv}
\end{table}

\subsection{Plate supported at the corners}

Next we consider the same problem with loading $f=1$ and $\eci=0$,
$\kj = \tj = \infty$, $\gvi = \gmi = \gci = 0$, $i=1,\dots,4$,
i.e.~the deflection is prevented only at the corners of the plate.
We investigate the convergence rate
of the error indicator
\begin{align*}
  E_K(u_h) = &\,h_K^2 \| D \Delta^2 u_h - 1 \|_{0,K} + \frac12 h_K^{3/2} \|
  \llbracket \Vn(u_h) \rrbracket \|_{0,\partial K} \\ &+ \frac12 h_K^{1/2} \|
  \llbracket \mnn(u_h) \rrbracket \|_{0,\partial K} + h^{3/2}_K \| \Vn(u_h)
  \|_{0,\partial K \cap \partial \Omega} \\
  & + h^{1/2}_K \left\| \mnn(u_h) \right\|_{0,\partial K \cap \partial \Omega}
    + h_K^{-1} \sum_{i=1}^4 u_h(c_i) \chi_K(c_i)
\end{align*}
as a function of the number of degrees-of-freedom $N$ with uniform and adaptive
mesh refinement strategies.  The results shown in Figure~\ref{fig:aposteriori}
indicate that an adaptive refinement based on the error indicator $E_K(u_h)$ successfully recovers
the convergence rate $\mathcal{O}(N^{-2})$.

\pgfplotstableread[col sep=comma]{ex2/ex2uniform.csv}{\uniformtab}
\pgfplotstableread[col sep=comma]{ex2/ex2adaptive.csv}{\adaptivetab}

\begin{figure}
  \centering
  \includegraphics[width=0.24\textwidth]{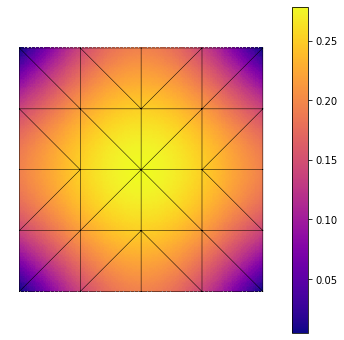}
  \includegraphics[width=0.24\textwidth]{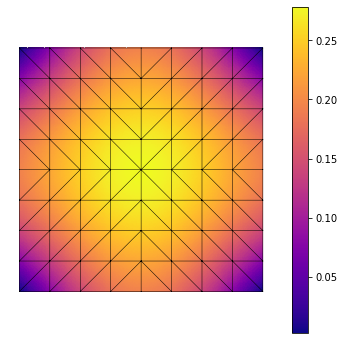}
  \includegraphics[width=0.24\textwidth]{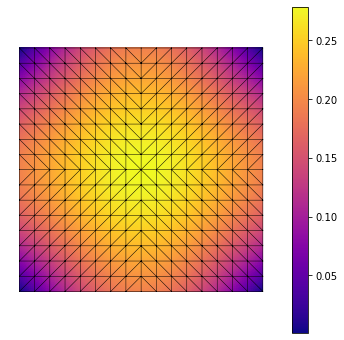}
  \includegraphics[width=0.24\textwidth]{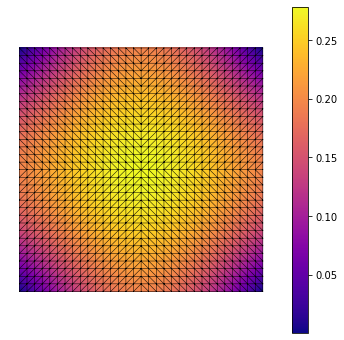}\\[0.5cm]
  \includegraphics[width=0.24\textwidth]{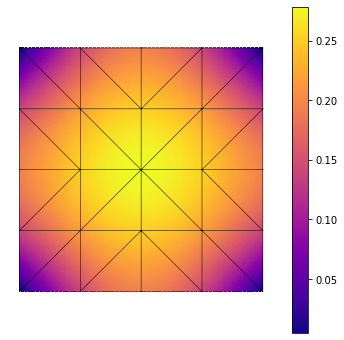}
  \includegraphics[width=0.24\textwidth]{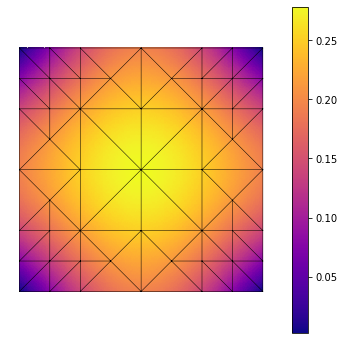}
  \includegraphics[width=0.24\textwidth]{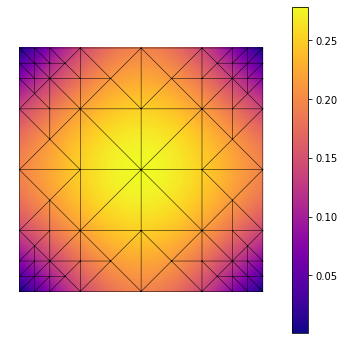}
  \includegraphics[width=0.24\textwidth]{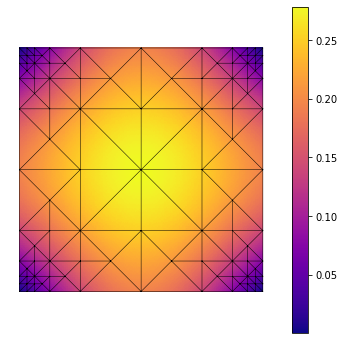}\\[0.5cm]
  \begin{tikzpicture}[scale=0.9]
    \begin{axis}[
        xmode = log,
        ymode = log,
        xlabel = {$N$},
        ylabel = {$\sqrt{\sum_{K \in \Ch} E_K^2(u_h)}$},
        grid = both,
      ]
      \addplot table[
        x=N,
        y=estimators] {\adaptivetab};
      \addplot table[
        x=N,
        y=estimators] {\uniformtab};
      \addplot+ [black, dashed, domain=7e2:2e3, mark=none] {exp(-1*ln(x) + ln(3e-1) - (-0.65)*ln(6e2)))} node[right,pos=1.0]{$\mathcal{O}(N^{-1})$};
       \addplot+ [black, dashed, domain=7e2:2e3, mark=none] {exp(-2*ln(x) + ln(3e-1) - (-1.5)*ln(6e2)))} node[right,pos=1.0]{$\mathcal{O}(N^{-2})$};

      \addlegendentry{Adaptive}
      \addlegendentry{Uniform}
    \end{axis}
  \end{tikzpicture}
  \caption{The first four meshes in the uniform (top row) and the adaptive (middle row) mesh sequences
    with the corresponding solutions and the a posteriori error estimator (bottom row) plotted as a
    function of the number of degrees-of-freedom $N$ with $\gamma = 10^{-3}$.
    The source code for reproducing the example is available in~\cite{zenodo2}.}
  \label{fig:aposteriori}
\end{figure}

\subsection{Elastic support with applied loads at the boundaries}

As the final example, we consider the square plate problem with $\nu=0$,
$\tj = 1$, $\eci = \infty$, the loading $f=0$, and
\begin{equation}
  \gvi = g^v(y) = \begin{cases}
    1 & \text{if $y < 3/4$}, \\
    0 & \text{otherwise},
  \end{cases} \qquad
  \gmi = g^r(y) = \begin{cases}
    10 & \text{if $y < 1/4$}, \\
    0 & \text{otherwise},
  \end{cases}
\end{equation}
for each $i=1,\dots,4$.
Our aim is to compare the adaptive meshes resulting from the Nitsche method
and the classical method when $\kj = \varepsilon^r = 10^{-k}$, $k=0,2,4,6$, $i=1,\dots,4$.  The error indicator for Nitsche's method
reads as
\begin{align*}
  E_K(u_h) = &\,h_K^2 \| D \Delta^2 u_h \|_{0,K} + \frac12 h_K^{3/2} \|
  \llbracket \Vn(u_h) \rrbracket \|_{0,\partial K} + \frac12 h_K^{1/2} \|
  \llbracket \mnn(u_h) \rrbracket \|_{0,\partial K} \\
  &+ \frac{h^{3/2}_K}{1 + h_K^3} \| \Vn(u_h) - g^v + u_h \|_{0,\partial K \cap \partial \Omega} \\
  &+ \frac{h_K^{1/2}}{\varepsilon^r + h_K} \left\| \varepsilon^r(\mnn(u_h) - g^r) - \dn{u_h} \right\|_{0,\partial K \cap \partial \Omega} \\
  &+ h_K \sum_{i = 1}^4 \llbracket \mns(u_h) \rrbracket |_{c_i} \chi_K(c_i).
\end{align*}
The error indicator for the classical method is
\begin{align*}
  E_K(u_h) = &\,h_K^2 \| D \Delta^2 u_h \|_{0,K} + \frac12 h_K^{3/2} \|
  \llbracket \Vn(u_h) \rrbracket \|_{0,\partial K} + \frac12 h_K^{1/2} \|
  \llbracket \mnn(u_h) \rrbracket \|_{0,\partial K} \\
  &+ h^{3/2}_K \| \Vn(u_h) - g^v + u_h \|_{0,\partial K \cap \partial \Omega} \\
  &+ h_K^{1/2} \left\| \mnn(u_h) - g^r - \frac{1}{\varepsilon^r}\dn{u_h} \right\|_{0,\partial K \cap \partial \Omega}.
\end{align*}
The resulting adaptive meshes are presented in Figure~\ref{fig:adaptcomp}.  The results show
that the classical method can lead to overrefinement in the case of stiff elastic supports.

\begin{figure}
  \centering
  $\varepsilon^r = 1$\\
  \includegraphics[width=0.31\textwidth]{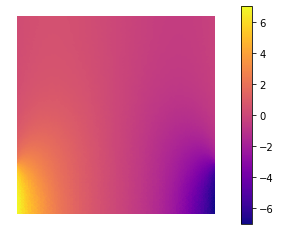}
  \hspace{0.01\textwidth}
  \includegraphics[width=0.25\textwidth]{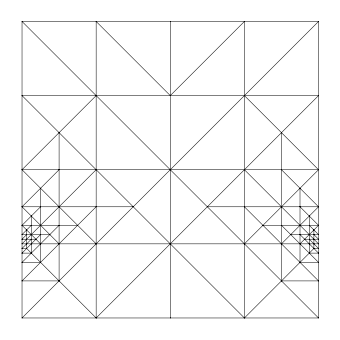}
  \includegraphics[width=0.25\textwidth]{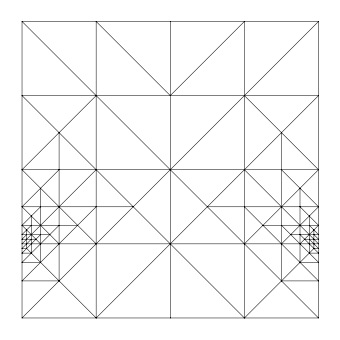}
  \hspace{-0.04\textwidth}
  \\
  $\varepsilon^r = 10^{-2}$\\
  \includegraphics[width=0.33\textwidth]{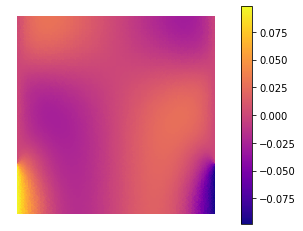}
  \includegraphics[width=0.25\textwidth]{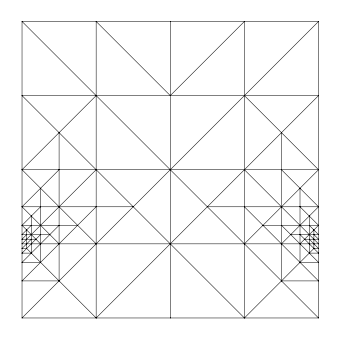}
  \includegraphics[width=0.25\textwidth]{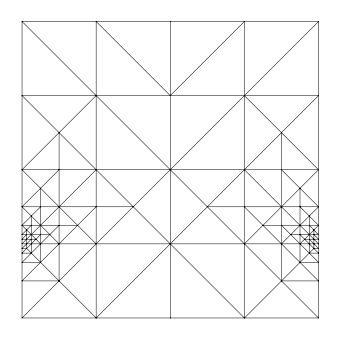}\\
  $\varepsilon^r  = 10^{-4}$\\
  \includegraphics[width=0.33\textwidth]{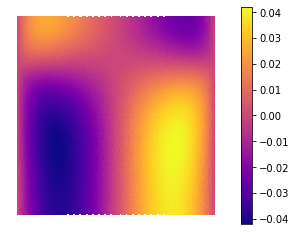}
  \includegraphics[width=0.25\textwidth]{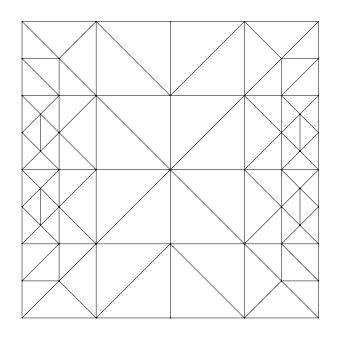}
  \includegraphics[width=0.25\textwidth]{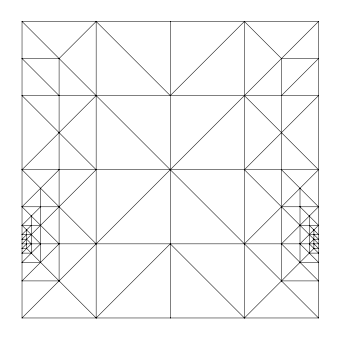}\\
  $\varepsilon^r  = 10^{-6}$\\
  \includegraphics[width=0.33\textwidth]{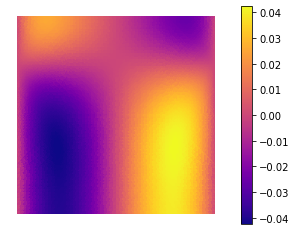}
  \includegraphics[width=0.25\textwidth]{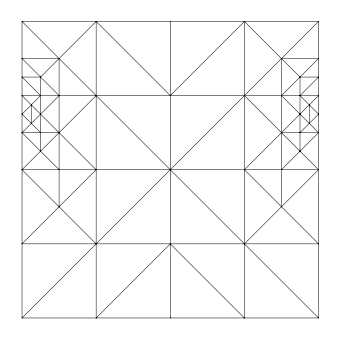}
  \includegraphics[width=0.25\textwidth]{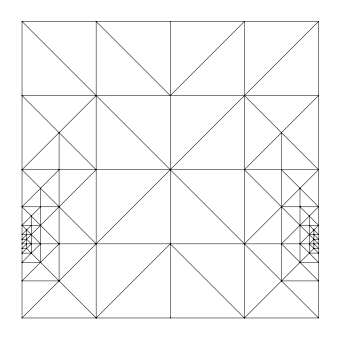}
  \caption{(Left column.) The derivative of the deflection $u$
  with respect to $x$. The presence of a singularity
  at $y=1/4$---due to a jump in the applied normal moment---is
  evident in the two topmost figures but not so much in the two
  bottom figures. (Middle column.) The meshes corresponding to the fifth adaptive refinement
  in the Nitsche method for different values of $\varepsilon^r$. 
  If $\varepsilon^r$ is small enough, the
  estimators successfully discard the lower singularity at $y=1/4$
  and focus instead
  on the singularity at $y=3/4$ caused by a jump in the
  Kirchhoff shear force.
  (Right column.) The meshes corresponding to the fifth adaptive refinement in the classical method
  for different values of $\varepsilon^r$.
  The estimators of the classical method 
  remain dominant near the lower singularity
  for small values of $\varepsilon^r$ due
  to the estimators scaling as $\mathcal{O}(1/\varepsilon^r)$.
  The source code for reproducing the example is available in~\cite{zenodo3}.
  }
  \label{fig:adaptcomp}
\end{figure}

\bibliographystyle{siamplain}
\bibliography{bibliography,plates}

\end{document}